\let\astorig\ast
\newtheorem{theorem}{Theorem}[section]
\newtheorem{lemma}[theorem]{Lemma}
\newtheorem{proposition}[theorem]{Proposition}
\newtheorem{corollary}[theorem]{Corollary}
\newtheorem{question}{Question}
\newtheorem*{thm:main1}{Theorem~\ref{thm:main1}}
\newtheorem*{thm:main2}{Theorem~\ref{thm:main2}}
\newcommand\abs[1]{\lvert #1\rvert}
\newcommand{\pivot}{\wedge}
\newcommand\tri{\boxslash}
\def\K_#1{{K_{#1}}}
\def\S_#1{\overline{K_{#1}}}
\newcommand\cutrk{\rho}
\newcommand\f{f}
\begin{document}
\title{Obstructions for bounded shrub-depth and rank-depth}

\author[1,2]{O-joung Kwon\thanks{Supported by IBS-R029-C1.}\thanks{Supported by the National Research Foundation of Korea (NRF) grant funded by the Ministry of Education (No. NRF-2018R1D1A1B07050294).}}

\affil[1]{\small Department of Mathematics, Incheon~National~University, Incheon,~South~Korea.}
\affil[2]{\small Discrete Mathematics Group, Institute~for~Basic~Science~(IBS), Daejeon,~South~Korea.}
\author[3]{Rose McCarty}
\affil[3]{\small Department of Combinatorics and Optimization, University~of~Waterloo, Waterloo,~Canada.}

\author[$\astorig$2,4]{Sang-il Oum}

\affil[4]{\small Department of Mathematical Sciences, KAIST,  Daejeon,~South~Korea.}

\author[5]{Paul~Wollan}
\affil[5]{\small Department of Computer Science, University~of~Rome,~``La~Sapienza'', Rome,~Italy.}

\date\today
\maketitle
  \setcounter{footnote}{1}%
  \footnotetext{E-mail addresses: \texttt{ojoungkwon@gmail.com} (Kwon),
    \texttt{rose.mccarty@uwaterloo.ca} (McCarty), 
    \texttt{sangil@ibs.re.kr} (Oum),
    \texttt{wollan@di.uniroma1.it} (Wollan).
  }

\begin{abstract}
  Shrub-depth and rank-depth are dense analogues of the tree-depth of a graph.
  It is well known that a graph has large tree-depth if and only if it has a long path as a subgraph. We prove an analogous statement for shrub-depth and rank-depth,
  which was conjectured by Hlin{\v{e}}n{\'y}, Kwon, Obdr{\v{z}}{\'a}lek, and Ordyniak [Tree-depth and vertex-minors, European J.~Combin. 2016].
  Namely, we prove that a graph has large rank-depth if and only if it has a vertex-minor isomorphic to a long path.
  This implies that for every integer $t$, the class of graphs with no vertex-minor isomorphic to the path on $t$ vertices
  has bounded shrub-depth.
 \end{abstract}

\section{Introduction}
Ne\v{s}et\v{r}il and Ossona de Mendez~\cite{NO2006} introduced the \emph{tree-depth} of a graph $G$, which is defined as the minimum height of a rooted forest whose closure contains the graph $G$ as a subgraph. This concept has been proved to be very useful,
in particular in the study of graph classes of bounded expansion~\cite{NO2012}.
Similar to the grid theorem for tree-width of Robertson and Seymour~\cite{RS1991},
it is known that a graph has large tree-depth if and only if it has a long path as a subgraph, see \cite[Proposition 6.1]{NO2006}.
For more information on tree-depth, the readers are referred to the surveys \cite{NO2015,NO2006} by Ne\v{s}et\v{r}il and Ossona de Mendez.

There have been attempts to define an analogous concept suitable for dense graphs.
For tree-width, this line of research has
resulted in width parameters such as clique-width~\cite{CO2000} and rank-width~\cite{OS2004}. 
In a conference paper published in 2012, Ganian, Hlin\v{e}n\'y, Ne\v{s}et\v{r}il, Obdr\v{z}\'alek, Ossona de Mendez, and Ramadurai~\cite{GHNOOR2012} introduced the \emph{shrub-depth} of a graph class, as an extension of tree-depth for dense graphs.
Recently, DeVos, Kwon, and Oum~\cite{DKO2019} introduced the \emph{rank-depth} of a graph
as an alternative to shrub-depth and showed that shrub-depth and rank-depth are equivalent in the following sense.
\begin{theorem}[DeVos, Kwon, and Oum~\cite{DKO2019}]\label{thm:rdsd}
A class of graphs has bounded rank-depth if and only if it has bounded shrub-depth.
\end{theorem}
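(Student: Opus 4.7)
The plan is to prove both implications separately, exploiting the natural correspondence between tree-models (underlying shrub-depth) and decomposition trees (underlying rank-depth).

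For the forward direction, suppose a class has shrub-depth at most $d$, realized by tree-models of depth $d$ with a fixed color palette of size $m$. I would reuse the tree of the tree-model as a decomposition tree for rank-depth. At each internal node $t$, the cut separating the leaves below $t$ from the rest has adjacency pattern determined by the bounded color data on each side together with the signatures along the ancestor path; hence its cut-rank is bounded by a function of $m$. After routine manipulations to binarize the tree without blowing up the depth too much, this yields rank-depth bounded by a function of $d$ and $m$.

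For the reverse direction, take a graph $G$ with rank-depth at most $d$ and fix an associated decomposition tree. At each internal node $t$ the cut has cut-rank at most some $r$ depending only on the rank-depth, so the leaves below $t$ fall into at most $2^r$ equivalence classes according to their neighborhood on the opposite side of the cut. I would color each leaf by the vector recording its equivalence class at every ancestor, which uses at most $(2^r)^d$ colors, and define the signature at each internal node so that adjacency between two leaves $u,v$ can be read off from the coordinates of their colors corresponding to their meet. The main obstacle is correctness: one must verify that every edge of $G$ is faithfully recovered from these local color-and-signature data and no spurious edges are introduced. This reduces to the fact that cut-rank is a rank function over $\mathbb{F}_2$, so adjacency across each cut is fully determined by the pair of equivalence classes of its endpoints; propagating this up the tree yields the desired tree-model and bounds shrub-depth in terms of $d$ and $r$.
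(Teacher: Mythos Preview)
The present paper does not prove Theorem~\ref{thm:rdsd}; it is quoted from~\cite{DKO2019} and used as a black box, so there is no proof here to compare your attempt against. I can still comment on your sketch.

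Your forward direction is essentially sound: a tree-model of depth $d$ with $m$ colors yields a decomposition of radius $d$, and at any internal node the adjacency across every induced cut is determined by the colors of the endpoints together with the depth of their meet, so each such cut has at most $m$ distinct rows and hence cut-rank at most $m$. (The binarization you mention is unnecessary, since rank-depth decompositions allow internal nodes of arbitrary degree.)

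The reverse direction, however, has a genuine gap. You color a leaf $u$ by the tuple of its equivalence classes at each ancestor $v$, where the class at $v$ records the row of $u$ in $A(G)[T_v, V(G)\setminus T_v]$. If $w$ is the meet of $u$ and $v$, then both $u$ and $v$ lie in $T_w$, so their classes at $w$ describe their neighbourhoods \emph{outside} $T_w$ and say nothing about whether $uv\in E(G)$. If instead you read off the coordinates one level deeper, at the children $a$ and $b$ of $w$ on the paths to $u$ and $v$ respectively, then the class of $u$ at $a$ and the class of $v$ at $b$ are defined with respect to \emph{different} cuts; equality of class labels across distinct subtrees at the same depth has no intrinsic meaning without a canonical identification, and there is no reason the signature you want is well-defined on colors alone. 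The proof in~\cite{DKO2019} handles this via an intermediate structure (an ``$(a,k)$-shrubbery''; see also the remark in Section~\ref{subsec:chi}) that makes the types at each level globally consistent rather than subtree-local. Your sketch does not provide such a mechanism.
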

Theorem~\ref{thm:rdsd} allows us to work exclusively with rank-depth going forward, and we omit the definition of shrub-depth. The definition of rank-depth is presented in Section~\ref{sec:prelim}.

One useful feature of rank-depth is that it does not increase under taking vertex-minors.
In other words, if $H$ is a vertex-minor of $G$, then the rank-depth of $H$ is at most that of $G$. This allows us to consider obstructions for having small rank-depth in terms of vertex-minors.
DeVos, Kwon, and Oum~\cite{DKO2019} showed that the rank-depth of the $n$-vertex path is larger than $\log n / \log (1+4\log n)$ for $n\ge 2$
and thus graphs having a long path as a vertex-minor have large rank-depth. Hlin{\v{e}}n{\'y}, Kwon, Obdr{\v{z}}{\'a}lek, and Ordyniak~\cite{HlinenyKJS2016}
conjectured that the converse is also true. Their original conjecture was stated in terms of shrub-depth but is equivalent by Theorem~\ref{thm:rdsd}.
We prove their conjecture as follows.
\begin{theorem}\label{thm:main1}
	For every positive integer $t$, there exists an integer~$N(t)$ such that every graph of rank-depth at least $N(t)$ contains a vertex-minor isomorphic to the path on $t$ vertices.
\end{theorem}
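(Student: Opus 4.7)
The plan is to exploit the decomposition-tree characterization of rank-depth and extract a long nested chain of vertex subsets of $G$, which can then be converted into a long induced path via local complementations. The overall strategy parallels the classical proof that large tree-depth implies a long path subgraph, where a long root-to-leaf path in any depth-optimal forest decomposition yields the path; but for $\rd$ the decomposition is more delicate, the path comes from a vertex-minor rather than a subgraph, and $\operatorname{cutrk}$-values rather than mere adjacency must be tracked.

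First, I would use the rank-decomposition tree definition: if $\rd(G)\ge N(t)$, then every rooted subcubic decomposition $T$ with leaves $V(G)$ contains a root-to-leaf path contributing depth $\ge N(t)$ in the appropriate cut-rank sense. Such a path displays a nested sequence $V(G)=V_0\supsetneq V_1\supsetneq\cdots\supsetneq V_m$ with associated cut-ranks $\cutrk(V_i)$. By pigeonholing on these cut-rank values, I can assume all of them are equal to some constant $r$, with $m$ still large as a function of $t$. This reduces the task to finding a $P_t$ vertex-minor inside a graph carrying such a ladder of constant-rank cuts; the degenerate case $r=0$ corresponds to $G$ splitting into connected components and is easy, so henceforth $r\ge 1$.

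Second, I would analyze the layers $L_i=V_{i-1}\setminus V_i$. Each $L_i$ communicates with the rest of $G$ only through a rank-$r$ bipartite adjacency, so its vertices fall into at most $2^r$ adjacency types relative to a fixed basis of that cut. A Ramsey-style refinement along the chain produces a long sub-chain of layers with identical types, from which one selects representatives $v_1,v_2,\ldots,v_m$ whose mutual adjacencies are governed by a bounded amount of local information. I would then construct the $P_t$ vertex-minor by a carefully ordered sequence of local complementations and pivots, deleting non-representative vertices to prune unwanted long-range adjacencies among the $v_i$ while preserving consecutive ones; an auxiliary induction on $r$, reducing rank-$r$ cuts to rank-$1$ cuts, seems natural here.

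The hardest step will be the final construction. Even after Ramsey has stabilized cut-ranks and types along the chain, the pattern of adjacencies between layers can be intricate, and a naive local-complementation strategy need not produce an induced path. The key technical lemma will be a \emph{vertex-minor Ramsey} statement guaranteeing that a sufficiently long chain of constant-rank, constant-type cuts contains a long induced $P_t$ as a vertex-minor, and the quantitative dependence of $N(t)$ on $t$ will be dictated by the iterated Ramsey bounds used in Steps~1 and~2 together with the induction on $r$. Proving this Ramsey-type lemma and keeping track of how local complementations modify the nested cut structure is where the real work lies.
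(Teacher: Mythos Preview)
Your pigeonhole step in the first paragraph does not go through. You write that along a root-to-leaf path in a decomposition you obtain a nested chain $V_0\supsetneq V_1\supsetneq\cdots\supsetneq V_m$ and then ``by pigeonholing on these cut-rank values'' you may assume they are all equal to some constant $r$. But pigeonhole only yields a long sub-chain of constant cut-rank if the values $\cutrk_G(V_i)$ lie in a bounded range. Large rank-depth does not give you this: a graph can have rank-depth $N$ simply because every decomposition has large \emph{width}, not large radius, and then the cut-ranks along any nested chain may be unbounded. Concretely, if $G$ has rank-width $w$, then any subcubic decomposition will exhibit cuts of rank about $w$, and $w$ is not bounded in terms of $t$ a priori. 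Your later ``induction on $r$'' does not fix this, because you never get a long chain in the first place when $r$ is allowed to grow.

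The paper deals with exactly this obstacle by invoking a separate, heavy result: the grid theorem for vertex-minors (Geelen, Kwon, McCarty, Wollan), which says that if rank-width exceeds some $\beta(P_t)$ then $G$ already contains $P_t$ as a vertex-minor. This lets one assume rank-width is bounded by a constant $q=q(t)$ from the outset, after which the real work is an inductive construction that is quite different from your layer-and-type analysis: one builds, for growing $a$, a vertex-minor of the form $(P_a,x)+(H,S)$ where $H$ is connected with still-large rank-depth, using Ramsey on the adjacency patterns between many disjoint copies of $P_{a-1}$ and the half-graph lemmas for $\S_n\tri\S_n$ and $\K_n\tri\S_n$. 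Your ``vertex-minor Ramsey'' lemma is essentially the entire content of the theorem in the bounded-rank-width regime, and you have not indicated any mechanism---no analogue of the $(P_a,x)+(H,S)$ gadget, no way to extend a partial path into a region of large rank-depth---for proving it.
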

Courcelle and Oum~\cite{CO2004} showed that there is a CMSO$_1$ transduction that maps a graph to its vertex-minors.
Therefore, Theorem~\ref{thm:main1} implies that a class~$\mathcal G$ of graphs has bounded rank-depth if and only if for every CMSO$_1$ transduction $\tau$, there exists an integer $t$ such that $P_t\notin \tau(\mathcal G)$, which was conjectured by Ganian, Hlin\v{e}n\'y, Ne\v{s}et\v{r}il, Obdr\v{z}\'alek, and Ossona de Mendez~\cite{GHNOO2017}.

If we apply the same proof for bipartite graphs, then we prove the following theorem on pivot-minors of graphs. Pivot-minors are more restricted in a sense that every pivot-minor of a graph is a vertex-minor but not every vertex-minor is a pivot-minor.
This theorem allows us to deduce a corollary for binary matroids of large branch-depth.
\begin{theorem}\label{thm:main2}
	For every positive integer $t$, there exists an integer~$N(t)$ such that every bipartite graph of rank-depth at least $N(t)$ contains a pivot-minor isomorphic to $P_t$.
  \end{theorem}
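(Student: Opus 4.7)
The plan is to observe that the proof of Theorem~\ref{thm:main1}, when applied to a bipartite input graph $G$, uses only pivot operations in essence, and therefore yields a pivot-minor isomorphic to $P_t$ rather than a mere vertex-minor. The key structural reason is that a pivot $G \pivot uv$ along an edge $uv$ preserves the bipartition $(X,Y)$ of $G$ (since it only toggles edges between $X$ and $Y$), whereas a single local complementation $G \ast v$ at a vertex $v$ of degree at least two creates edges within one side of the bipartition and thus destroys bipartiteness.

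Concretely, I would walk through each local complementation used in the proof of Theorem~\ref{thm:main1} and show that, in the bipartite setting, it can be replaced by a pivot $\pivot uv$ along a suitable edge $uv$ incident to $v$. Since rank-depth is invariant under pivots (being invariant under local complementations in general), the inductive or recursive skeleton of the proof carries over unchanged. Moreover, since $P_t$ is itself bipartite, the endpoint of a sequence of bipartite-respecting operations remains bipartite with the inherited bipartition, and so it is indeed reachable by pivots alone.

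The main step to verify is that each application of a local complementation in the proof of Theorem~\ref{thm:main1} admits such a pivot replacement. Local complementations at isolated or pendant vertices are harmless and already preserve bipartiteness. For local complementations applied at a vertex $v$ with larger degree, one exhibits a neighbor $u$ and uses the pivot $\pivot uv$ in place of $\ast v$; since any further local complementations at $v$ that may follow can be absorbed into subsequent pivots via the identity $\pivot uv = \ast u \ast v \ast u$, no information is lost.

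The main obstacle is verifying in detail that every ``structural witness'' used in the proof of Theorem~\ref{thm:main1} (such as a vertex with a prescribed neighborhood, a long induced structure, or a reducible substructure) has a bipartite analog that is preserved under pivots. This boils down to observing that the cut-rank function $\cutrk$, which governs rank-depth, behaves identically on bipartite graphs under pivots as on general graphs under local complementations, so that every quantitative inequality used in the proof of Theorem~\ref{thm:main1} survives the restriction to the bipartite world.
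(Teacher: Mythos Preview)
Your plan has the right spirit but misses the two concrete points that actually make the bipartite case go through.

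First, you cannot in general ``replace $\ast v$ by $\pivot uv$ for some neighbor $u$'' and expect the same structural outcome. In the proof of Proposition~\ref{prop:main} the one genuinely non-pivot step is the local complementation at $x_{i'}$ applied when $\{x_i:i\in W\}$ is a clique, whose purpose is to turn $\{x_i:i\in W\setminus\{i'\}\}$ into an independent set. A pivot $\pivot x_{i'}w$ does \emph{not} complement $G[N(x_{i'})]$ and will not produce that independent set; your absorption remark via $\pivot uv=\ast u\ast v\ast u$ does not rescue this. The correct argument is different: because pivots preserve bipartiteness, after doing only pivots the set $\{x_i:i\in W\}$ lives in a bipartite graph and hence contains no triangle; since $\abs{W}\ge 3$ and the set is either a clique or an independent set by the Ramsey step, it must already be independent, so the offending local complementation never needs to be performed. (All other operations in the proof of Proposition~\ref{prop:main}---Lemmas~\ref{lem:pivotpn1} and~\ref{lem:pivotpn2} and the final pivot $x_{i_2}w$---are pivots anyway.)

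Second, you do not address the large-rank-width branch of the proof of Theorem~\ref{thm:main1}. That branch invokes Theorem~\ref{thm:gridtheorem}, the vertex-minor grid theorem, which only yields a \emph{vertex}-minor isomorphic to $P_t$. To get a pivot-minor in the bipartite case you need the bipartite pivot-minor analogue due to Oum (derived from the binary-matroid grid theorem): every bipartite graph of sufficiently large rank-width contains a pivot-minor isomorphic to any fixed bipartite circle graph, in particular $P_t$. Without this ingredient your argument has no way to dispose of bipartite graphs of unbounded rank-width.
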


The paper is organized as follows.
In Section~\ref{sec:prelim}, we review vertex-minors and rank-depth and prove a few useful properties related to rank-depth.
In Section~\ref{sec:proof}, we present the proof of Theorem~\ref{thm:main1}.
In Section~\ref{sec:pivotminor}, we obtain Theorem~\ref{thm:main2} and discuss its consequence to binary matroids of large branch-depth.
Finally, in Section~\ref{sec:remark} we conclude the paper by giving some remarks
on linear $\chi$-boundedness of graphs with no $P_t$ vertex-minors.

\section{Preliminaries and basic lemmas}\label{sec:prelim}
All graphs in this paper are simple, meaning that neither loops nor parallel edges are allowed. 
For two sets $X$ and $Y$, we write $X\Delta Y$ for $(X\setminus Y)\cup (Y\setminus X)$.

Let $G$ be a graph. We write $V(G)$ and $E(G)$ for the vertex set and the edge set of $G$, respectively.
For a vertex $v$ of $G$, we write $N_G(v)$ to denote the set of all neighbors of $v$ in $G$.
For a vertex $v$ of $G$, let $G-v$ denote the graph obtained from $G$ by removing $v$ and all edges incident with $v$.
For an edge $e$ of $G$, let $G-e$ denote the graph obtained from $G$ by removing $e$.
For a vertex subset $S$ of $G$, we write $G[S]$ for the subgraph of $G$ induced by $S$.
We write $\overline{G}$ for the \emph{complement} of $G$; that is, $u$ and $v$ are adjacent in $G$ if and only if they are not adjacent in $\overline{G}$.

We write $A(G)$ for the \emph{adjacency matrix} of $G$ over the binary field, that is, the $V(G)\times V(G)$ matrix over the binary field
such that the $(x,y)$-entry is one if $x\neq y$ and $x$ is adjacent to $y$ in $G$, and zero otherwise.
For an $X\times Y$  matrix $M$ and $X'\subseteq X$, $Y'\subseteq Y$, 
we write $M[X',Y']$ for the $X'\times Y'$ submatrix of $M$.

Let $P_n$ denote the path on $n$ vertices, and let $K_n$ denote the complete graph on $n$ vertices. The \emph{radius} of a tree is the minimum $r$ such that
there is a node having distance at most $r$ from every node.

For two $n$-vertex graphs $G$ and $H$ with fixed orderings $\{v_1,v_2,\ldots,v_n\}$ and $\{w_1, w_2, \ldots, w_n\}$ on their respective vertex sets, 
let 
$G\tri H$
be the graph with vertex set $V(G)\cup V(H)$ 
such that 
$(G\tri H)[V(G)]=G$, $(G\tri H)[V(H)]=H$, and 
 for all $i,j\in \{1,2,\ldots,n\}$, 
$v_iw_j\in E(G\tri H)$ if and only if $i\ge j$. See Figure~\ref{fig:HG} for an example.
An induced subgraph isomorphic to $G\tri H$ for some $G$ and $H$ is called a \emph{semi-induced half-graph} in \cite{NORS2019}.
        \begin{figure}
          \centering
          \tikzstyle{v}=[circle, draw, solid, fill=black, inner sep=0pt, minimum width=3pt]
        \begin{tikzpicture}
		\def \h {.75} \def \w {1.5}
          \foreach \i in  {1,...,4} {
           	\node[v,label={[left, xshift=-.3cm]$v_\i$}] at (0, -\i*\h) (v\i){};
		\node[v,label={[right, xshift=.3cm]$w_\i$}] at (\w, -\i*\h) (w\i){};
          }
	\foreach \i in {1,...,4}{%
	\foreach \j in {\i,...,4} {
	    \draw (v\j) -- (w\i);
	}}
	\foreach \i in {1,...,3}{%
		\FPeval{\result}{clip(\i+1)}
	    	\draw (v\i) -- (v\result);
		\draw (w\i) -- (w\result);
	}
	\draw (v1) to [bend right=16] (v3);%
	\draw (v2) to [bend right=16] (v4);
	\draw (v1) to [bend right=24] (v4);
	\draw (w1) to [bend left=16] (w3);
	\draw (w2) to [bend left=16] (w4);
	\draw (w1) to [bend left=24] (w4);
        \end{tikzpicture}
          \caption{The graph $K_4 \tri K_4$.}
          \label{fig:HG}
        \end{figure}
\subsection{Vertex-minors}
For a vertex $v$ of a graph $G$,
\emph{local complementation} at $v$
is an operation which results in a new graph $G*v$ on $V(G)$
such that
\[
  E(G*v)=E(G)\Delta\{xy: x, y\in N_G(v), x\neq y\}.\]
For an edge $uv$ of a graph $G$, the operation of \emph{pivoting} $uv$, denoted $G \pivot uv$, is defined as
$G\pivot uv:=G*u*v*u$.  See Oum~\cite{Oum2004} for further background and properties of local complementation and pivoting.
In particular, note that if $G$ is bipartite, then so is $G\pivot uv$.

A graph $H$ is \emph{locally equivalent} to $G$
if $H$ can be obtained from $G$ by a sequence of local complementations.
A graph $H$ is \emph{pivot equivalent} to $G$
if $H$ can be obtained from $G$ by a sequence of pivots.
A graph $H$ is a \emph{vertex-minor} of $G$ if
$H$ is an induced subgraph of a graph that is locally equivalent to $G$.
Finally, a graph $H$ is a \emph{pivot-minor} of $G$ if
$H$ is an induced subgraph of a graph that is pivot equivalent to $G$.

For a subset $S$ of $V(G)$, let $\cutrk_G(S)$ be the rank of the $S \times (V(G)\setminus S)$ submatrix of $A(G)$. This function is called the \emph{cut-rank} function of $G$.
It is easy to show that the cut-rank function is invariant under taking local complementations, again see Oum~\cite{Oum2004}. Thus we have the following fact.
\begin{lemma}\label{lem:monotone}
  If $H$ is a vertex-minor of $G$ and $X\subseteq V(G)$, then
  \[\rho_H(X\cap V(H))\le \rho_G(X).\]
\end{lemma}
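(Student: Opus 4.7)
The plan is to unfold the definition of vertex-minor and apply the invariance of cut-rank under local complementation that is cited immediately before the statement. By definition, there exists a graph $G'$ on the vertex set $V(G)$ that is locally equivalent to $G$ and satisfies $H = G'[V(H)]$ for some $V(H) \subseteq V(G)$. The cited invariance gives $\rho_{G'} = \rho_G$ as set functions on $2^{V(G)}$, so it suffices to prove the inequality in the induced-subgraph case, namely that $\rho_H(X \cap V(H)) \le \rho_{G'}(X)$ for every $X \subseteq V(G')$.

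To handle the induced-subgraph case, I would observe that the cut-matrix $A(H)[X \cap V(H),\, V(H) \setminus X]$ is literally a submatrix of $A(G')[X,\, V(G') \setminus X]$: the row index set $X \cap V(H)$ is contained in $X$, the column index set $V(H) \setminus X$ equals $(V(G') \setminus X) \cap V(H)$, and because $H$ is an induced subgraph of $G'$ the corresponding entries agree. Since the rank of a submatrix is at most the rank of the ambient matrix, this yields $\rho_H(X \cap V(H)) \le \rho_{G'}(X) = \rho_G(X)$, as required.

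There is essentially no serious obstacle: the lemma is almost immediate from the two ingredients already in hand, namely the definition of vertex-minor and the invariance of cut-rank under local complementation. The only mild care is in noting that $V(H)$ can be identified with a subset of $V(G)$, so that $X \cap V(H)$ is unambiguous, and that restricting to $V(H)$ affects rows and columns independently, leaving exactly the cut-matrix of $H$. One could equivalently split the argument into two one-line steps (first passing from $G$ to the locally equivalent $G'$, then deleting the vertices of $V(G') \setminus V(H)$ one at a time), but the submatrix observation makes the proof a single line.
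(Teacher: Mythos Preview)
Your proposal is correct and matches the paper's approach: the paper gives no explicit proof, simply stating the lemma as an immediate consequence of the invariance of the cut-rank function under local complementation together with the definition of vertex-minor, and your argument spells out precisely this one-line deduction (the submatrix observation for induced subgraphs).
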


The following lemmas will be used to find a long path.

	\begin{lemma}[Kim, Kwon, Oum, and Sivaraman~{\cite[Lemma 5.6]{KKOS2019}}]\label{lem:pivotpn1}
 The graph $\K_n\tri \S_n$ has a pivot-minor isomorphic to $P_{n+1}$.
	\end{lemma}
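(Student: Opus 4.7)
The plan is to exhibit, for $n \ge 3$, an explicit sequence of pivots on $\K_n \tri \S_n$ together with a vertex set $S$ of size $n+1$ so that $S$ induces $P_{n+1}$ after the pivots. The cases $n \le 2$ are handled directly: $\K_1 \tri \S_1$ is $P_2$, and in $\K_2 \tri \S_2$ the three vertices $\{v_1, v_2, w_2\}$ already induce a $P_3$ with no pivots needed. For $n \ge 3$, I would pivot in order on the edges $v_2 w_2, v_3 w_3, \ldots, v_{n-1} w_{n-1}$, and then take $S = \{v_1, w_2, w_3, \ldots, w_{n-1}, v_n, w_n\}$; the claim is that in the resulting graph $G^*$, the set $S$ induces the path $v_1\,w_2\,w_3\,\cdots\,w_{n-1}\,v_n\,w_n$.

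The core of the argument is an induction on $k$ that tracks the full adjacency structure of the intermediate graph $G_k$ obtained from $\K_n \tri \S_n$ after the first $k-1$ pivots. The invariant I would maintain for $2 \le k \le n-1$ is: in $G_k$, the \emph{outer} vertices $v_1, w_1$ and, for $2 \le i \le k-1$, the vertices $v_i, w_i$ have already acquired small path-like neighborhoods (namely $v_i$ is adjacent to $\{w_i, w_{i+1}\}$ and $w_i$ to $\{v_{i-1}, w_{i-1}, v_i, w_{i+1}\}$, with the boundary conventions $N(v_1) = \{w_1, w_2\}$ and $N(w_1) = \{v_1, w_2\}$); the \emph{boundary} pair satisfies $N(v_k) = \{v_{k+1}, \ldots, v_n, w_k\}$ and $N(w_k) = \{v_{k-1}, w_{k-1}, v_k\} \cup \{v_{k+1}, \ldots, v_n\}$; and the \emph{inner} part induced on $\{v_{k+1}, \ldots, v_n, w_{k+1}, \ldots, w_n\}$ is isomorphic to $\K_{n-k} \tri \S_{n-k}$ under the natural shift of indices. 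The base case $k = 2$ is essentially the definition of $\K_n \tri \S_n$, and the inductive step is a direct pivot computation: one checks that pivoting $v_{k+1} w_{k+1}$ in $G_k$ yields the three classes $V_1 = \{v_k, w_k\}$, $V_2 = \emptyset$, $V_3 = \{v_{k+2}, \ldots, v_n\}$, so the pivot toggles exactly the edges between $\{v_k, w_k\}$ and $\{v_{k+2}, \ldots, v_n\}$; together with the swap of $v_{k+1}$ and $w_{k+1}$, this promotes the pair $(v_k, w_k)$ into the outer region and establishes the new boundary $(v_{k+1}, w_{k+1})$.

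Once the invariant is established at $k = n-1$, the conclusion follows directly from the neighborhood lists: each vertex of $S$ has, as its neighbors in $G^*$ that lie inside $S$, exactly its two path-neighbors (with $v_1$ having only $w_2$ and $w_n$ having only $v_n$), so $S$ induces the required $P_{n+1}$. The main obstacle is the bookkeeping across the successive pivots, in particular verifying that each new pivot $v_{k+1} w_{k+1}$ does not disturb the already-settled outer segment. The invariant is crafted precisely to make this transparent, since the outer vertices are not adjacent to any of $v_{k+1}$, $w_{k+1}$, or $V_3$ in $G_k$, so they are immune to both the toggle step and the swap step of the pivot operation.
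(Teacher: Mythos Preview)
The paper does not prove this lemma itself; it is quoted from Kim, Kwon, Oum, and Sivaraman~\cite{KKOS2019}, so there is no in-paper argument to compare against. Your approach is correct: the explicit pivot sequence $v_2w_2,\ldots,v_{n-1}w_{n-1}$ together with the set $S=\{v_1,w_2,\ldots,w_{n-1},v_n,w_n\}$ does yield an induced $P_{n+1}$, and the three-part invariant (outer/boundary/inner) is exactly the bookkeeping one needs to push the computation through cleanly. In particular, your identification of the pivot classes $V_1=\{v_k,w_k\}$, $V_2=\emptyset$, $V_3=\{v_{k+2},\ldots,v_n\}$ in $G_k$ is correct, and it follows that the outer segment is untouched while the boundary pair is absorbed into it.

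One small inaccuracy: you write that ``the base case $k=2$ is essentially the definition of $\K_n\tri\S_n$,'' but with your own indexing $G_k$ is the graph \emph{after} $k-1$ pivots, so $G_2$ already incorporates the pivot on $v_2w_2$. The invariant at $k=2$ does \emph{not} describe the original graph (for instance $N(v_1)=\{v_2,\ldots,v_n,w_1\}$ in $\K_n\tri\S_n$, not $\{w_1,w_2\}$); rather, it is what one obtains by a single pivot computation starting from the definition. This is a purely expository slip---the required computation is routine and identical in form to your inductive step---so it does not affect the correctness of the argument.
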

	\begin{lemma}[Kwon and Oum~{\cite[Lemma 2.8]{KO2014}}]\label{lem:pivotpn2}
 The graph $\S_n\tri \S_n$ has a pivot-minor isomorphic to $P_{2n}$.
	\end{lemma}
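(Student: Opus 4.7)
The plan is to exhibit an explicit sequence of pivots on $\S_n \tri \S_n$ producing $P_{2n}$ outright; no vertex deletions will be needed. Set $G_0 := \S_n \tri \S_n$ and, for $k = 1, \dots, n-2$, let $G_k := G_{k-1} \pivot v_{k+1} w_{k+1}$. I would argue that $G_{n-2}$ is isomorphic to $P_{2n}$ under the vertex ordering
\[
v_1,\; w_1,\; w_2,\; v_2,\; w_3,\; v_3,\; \ldots,\; w_{n-1},\; v_{n-1},\; v_n,\; w_n.
\]

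To justify this, I would prove by induction on $k$ (for $1 \le k \le n-2$) that the edges of $G_k$ are precisely $v_1 w_1$, $w_1 w_2$, $\{v_i w_i : 2 \le i \le k+1\}$, and $\{v_i w_{i+1} : 2 \le i \le k\}$, together with the ``attachment'' edges $\{v_{k+1} v_i : k+2 \le i \le n\}$ and the edges $\{v_i w_j : k+2 \le j \le i \le n\}$ of an induced copy of $\S_{n-k-1} \tri \S_{n-k-1}$ on $\{v_{k+2},\dots,v_n,w_{k+2},\dots,w_n\}$. Equivalently, $G_k$ is the union of the path with vertex sequence $v_1, w_1, w_2, v_2, \ldots, w_{k+1}, v_{k+1}$ and a copy of $\S_{n-k-1} \tri \S_{n-k-1}$ on the remaining $2(n-k-1)$ vertices, joined by attachment edges from $v_{k+1}$ to the entire $v$-side of that copy.

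The base case $k = 1$ is a direct computation from the definition of pivot, using that in $G_0$ we have $N(v_2) = \{w_1, w_2\}$ and $N(w_2) = \{v_2,v_3,\dots,v_n\}$. The inductive step uses the invariant to read off $N_{G_k}(v_{k+2}) = \{v_{k+1}, w_{k+2}\}$ and $N_{G_k}(w_{k+2}) = \{v_{k+2}, v_{k+3}, \dots, v_n\}$. Thus pivoting at the edge $v_{k+2} w_{k+2}$ has a highly localized effect: it only modifies the neighborhoods of $v_{k+2}$ and $w_{k+2}$ themselves and toggles edges between $B := \{v_{k+1}\}$ and $C := \{v_{k+3},\dots,v_n\}$. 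A routine check then confirms that $G_{k+1}$ satisfies the invariant with $k$ replaced by $k+1$: the path lengthens by the two new vertices $w_{k+2}, v_{k+2}$, the old attachment edges $v_{k+1} v_i$ (for $i \ge k+3$) are replaced by the new attachment edges $v_{k+2} v_i$, and the residual copy of the half-graph shrinks from $\S_{n-k-1} \tri \S_{n-k-1}$ to $\S_{n-k-2} \tri \S_{n-k-2}$.

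Setting $k = n-2$ collapses the residual copy to the single edge $v_n w_n$ and the attachment to the single edge $v_{n-1} v_n$, producing exactly the path $P_{2n}$ in the claimed order. The main obstacle in this plan is just the bookkeeping in the inductive step: because pivoting is a ``swap'' of the two endpoints' roles together with a toggle on $B \times C$, one must carefully separate the four regions determined by the pivot edge when verifying the invariant.
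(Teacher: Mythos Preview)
The paper does not supply its own proof of this lemma; it is quoted verbatim from Kwon and Oum~\cite{KO2014}, so there is nothing in the present paper to compare against.

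Your argument is correct. The key point that makes the induction clean is that at every stage the common neighbourhood $N_{G_k}(v_{k+2})\cap N_{G_k}(w_{k+2})$ is empty, so the pivot on $v_{k+2}w_{k+2}$ really does nothing beyond swapping the two neighbourhoods and toggling the pairs in $B\times C=\{v_{k+1}\}\times\{v_{k+3},\dots,v_n\}$. Tracing this through (as you sketch) one finds exactly that the path segment grows by $w_{k+2},v_{k+2}$, the attachment shifts from $v_{k+1}$ to $v_{k+2}$, and the residual half-graph loses one layer. For $n\le 2$ no pivot is needed since $\S_n\tri\S_n$ is already a path, so the induction only has content for $n\ge 3$; you may want to say this explicitly. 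This direct pivot sequence is the natural one and is very likely the same as (or a mild relabelling of) the proof in the cited reference.
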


        \subsection{Rank-depth}
        We now review the notion of rank-depth, which was introduced by DeVos, Kwon, and Oum~\cite{DKO2019}.
A \emph{decomposition} of a  graph $G$  is 
a pair $(T,\sigma)$ of a tree $T$ and a bijection $\sigma$ from $V(G)$ to the set of leaves of $T$.
The \emph{radius} of a decomposition $(T,\sigma)$ 
is the radius of the tree $T$. 
For a non-leaf node $v \in V(T)$, the components of the graph $T - v$ give rise to a partition $\mathcal P_v$ of $V(G)$ by $\sigma$. 
The \emph{width} of $v$ is defined to be 
\[\max_{ \mathcal{P'} \subseteq \mathcal P_v } \cutrk_G \left( \bigcup_{X \in \mathcal{P'} } X \right).\]
The \emph{width} of the decomposition $(T,\sigma)$ is the maximum width of a non-leaf node of $T$.
We say that a decomposition $(T, \sigma)$ is a $(k,r)$-\emph{decomposition} of $G$
if the width is at most $k$ and the radius is at most $r$. 
The \emph{rank-depth} of a graph $G$ is the minimum integer $k$ such that 
$G$ admits a $(k, k)$-decomposition.
If $\abs{V(G)}<2$, then there is no decomposition and the rank-depth is zero.
Note that every tree in a decomposition has radius at least one and therefore
the rank-depth of a graph is at least one if $\abs{V(G)}\ge 2$.

By Lemma~\ref{lem:monotone}, it is easy to see the following.
\begin{lemma}[DeVos, Kwon, and Oum~\cite{DKO2019}]
  If $H$ is a vertex-minor of $G$, then the rank-depth of $H$ is at most the rank-depth of $G$.
\end{lemma}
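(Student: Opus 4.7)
The plan is to start with an optimal decomposition $(T,\sigma)$ of $G$ and convert it into a decomposition of $H$ whose width and radius are bounded by those of $(T,\sigma)$. Since $H$ is a vertex-minor of $G$, we have $V(H)\subseteq V(G)$, so the leaves of $T$ already contain a distinguished copy of $V(H)$, namely $\sigma(V(H))$, and the natural object to look at is the minimal subtree of $T$ spanning these leaves.

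After first dispatching the trivial case $\abs{V(H)}\le 1$ (where $\rd(H)=0$), let $k=\rd(G)$ and let $(T,\sigma)$ be a $(k,k)$-decomposition of $G$. Let $T_0$ be the minimal subtree of $T$ containing $\sigma(V(H))$, let $T'$ be obtained from $T_0$ by suppressing all internal nodes of degree two, and let $\sigma'=\sigma|_{V(H)}$. Minimality of $T_0$ forces every leaf of $T_0$ to be labelled, so $\sigma'$ is a bijection from $V(H)$ to the leaves of $T'$.

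For the width, fix any non-leaf node $w$ of $T'$. Then $w$ is also a non-suppressed node of $T_0$ and hence of $T$. The components of $T'-w$ are in bijection with those components of $T-w$ that contain at least one leaf from $\sigma(V(H))$, and the $V(H)$-vertices in each component of $T'-w$ are precisely the $V(H)$-vertices in the corresponding component of $T-w$. Consequently, every union $X'$ of parts of the partition of $V(H)$ induced by $T'-w$ has the form $X\cap V(H)$ for some union $X$ of parts of the partition of $V(G)$ induced by $T-w$, and by Lemma~\ref{lem:monotone} we get $\rho_H(X')\le \rho_G(X)\le k$.

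The delicate step, which I expect to be the main obstacle, is controlling the radius, because pruning and suppression could in principle shift the center. I would handle this via the elementary identity that the radius of any tree equals $\lceil\operatorname{diam}/2\rceil$: passing from $T$ to the subtree $T_0$ does not increase any pairwise distance, and suppressing a degree-two internal node can only decrease the length of a path between surviving nodes. Hence $\operatorname{diam}(T')\le \operatorname{diam}(T)\le 2\operatorname{rad}(T)\le 2k$, which gives $\operatorname{rad}(T')\le k$. Combined with the width bound, this shows that $(T',\sigma')$ is a $(k,k)$-decomposition of $H$ and hence $\rd(H)\le k$.
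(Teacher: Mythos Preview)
Your argument is correct. The paper does not actually spell out a proof of this lemma; it merely says that it follows easily from Lemma~\ref{lem:monotone} and attributes the result to \cite{DKO2019}. Your construction---restrict $T$ to the minimal subtree $T_0$ spanning $\sigma(V(H))$, optionally suppress degree-two internal nodes, then invoke Lemma~\ref{lem:monotone} for the width and the identity $\operatorname{rad}(T)=\lceil\operatorname{diam}(T)/2\rceil$ for the radius---is exactly the kind of routine verification the paper is pointing at. Two minor remarks: the suppression step is not needed (the subtree $T_0$ already has leaf set $\sigma(V(H))$ and radius at most that of $T$, by the same diameter inequality), and when $\abs{V(H)}=2$ your $T'$ may be a single edge with no internal node, but then the width condition is vacuous and the radius is~$1\le k$, so the conclusion still holds.
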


	The next two lemmas will serve as a base case for induction in the proof of Theorem~\ref{thm:main1}.
	\begin{lemma}\label{lem:component}
	Let $G$ be a graph of rank-depth $m$.
	Then $G$ has a connected component of rank-depth at least $m-1$.
	\end{lemma}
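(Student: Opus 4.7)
The plan is to prove the contrapositive in the following stronger form: $\rd(G) \le 1 + \max_i \rd(C_i)$, where $C_1, \ldots, C_k$ are the connected components of $G$. Setting $r := \max_i \rd(C_i)$, I will construct an $(r{+}1, r{+}1)$-decomposition of $G$ by gluing optimal decompositions of the components at a new common root; this immediately yields the lemma, since then $\rd(G) = m$ forces $\max_i \rd(C_i) \ge m-1$.

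First, for each component $C_i$ with $|V(C_i)| \ge 2$ I fix an $(r,r)$-decomposition $(T_i,\sigma_i)$ together with a non-leaf node $c_i$ of $T_i$ of eccentricity at most $r$. Such a $c_i$ exists whenever $T_i$ has at least three nodes, since the center of such a tree is not a leaf. The only delicate case is $|V(C_i)|=2$, where the minimal decomposition of $C_i$ is a single edge with both endpoints leaves; here $\rd(C_i)=1 \le r$, so I can instead use the star on three nodes as a legitimate $(1,1)$-decomposition, whose central node qualifies as $c_i$. The new tree $T$ is then formed by introducing a new root $v_0$, adding an edge from $v_0$ to each $c_i$, and, for each singleton component $\{v\}$ of $G$, appending a fresh leaf of $T$ adjacent to $v_0$ and setting $\sigma(v)$ to that leaf. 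By construction $T$ is a tree whose leaves are in bijection with $V(G)$ via $\sigma$.

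The radius bound is immediate: from $v_0$ every singleton leaf is at distance $1$ and every leaf of some $T_i$ is at distance at most $1+r$ via $c_i$, so the radius of $T$ is at most $r+1$. For the width bound the key tool is that cut-rank is additive across components: since $G$ has no edges between distinct $C_i$'s, $A(G)$ is block-diagonal in the component partition and
\[
\rho_G(S) = \sum_i \rho_{C_i}(S \cap V(C_i))
\]
for every $S \subseteq V(G)$. At $v_0$ each subunion of $\mathcal{P}_{v_0}$ is a union of full components, so every term in the right-hand side vanishes and the width at $v_0$ is $0$. At a non-leaf node $v$ coming from some $T_i$, the partition $\mathcal{P}_v$ in $T$ differs from the $T_i$-partition at $v$ only by enlarging the single class on the $c_i$-side by $V(G)\setminus V(C_i)$; the additivity formula kills all contributions from components $C_j$ with $j\ne i$ (the relevant $C_j$-submatrix has an empty column-set), so what remains equals $\rho_{C_i}$ evaluated on a valid subunion of the $T_i$-partition at $v$, which is at most $r$.

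The only mild obstacle is the bookkeeping that guarantees $c_i$ is a non-leaf of $T_i$, because otherwise attaching it to $v_0$ would destroy the leaf-to-vertex bijection; the $3$-node star replacement for $2$-vertex components handles this uniformly. Beyond that, the computations above combine to show that $T$ is an $(r+1,r+1)$-decomposition of $G$, completing the proof of the contrapositive.
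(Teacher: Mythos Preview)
Your proof is correct and follows essentially the same route as the paper's: glue $(r,r)$-decompositions of the components at a fresh root and check the radius and width bounds, with the width bound coming from the block-diagonal structure of $A(G)$ across components. You are in fact more careful than the paper in insisting that each attachment node $c_i$ be a non-leaf (handling two-vertex components via the three-node path); the only harmless slips are that your construction as written leaves $v_0$ as a spurious unmatched leaf when $G$ has a single component (a case where $\rd(G)=r\le r+1$ is trivial anyway), and that the phrase ``enlarging the single class on the $c_i$-side'' needs a small reinterpretation when $v=c_i$, where a new class $V(G)\setminus V(C_i)$ is added rather than an existing one enlarged---your additivity argument goes through unchanged in that case.
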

	\begin{proof}
	If $m<2$, then it is trivial, as the one-vertex graph has rank-depth zero.
	Thus, we may assume that $m\ge 2$.
	
	Suppose for contradiction that 
	every connected component of $G$ has rank-depth at most $m-2$.
	Let $C_1, C_2, \ldots, C_t$ be the connected components of $G$.
	For each $i\in \{1, 2, \ldots, t\}$, 
	\begin{itemize}
	\item if $C_i$ contains at least two vertices, then 
	we take an $(m-2, m-2)$-decomposition $(T_i, \sigma_i)$ where $r_i$ is a node of $T_i$ having distance at most $m-2$ to every node of $T_i$, and
	\item if $C_i$ consists of one vertex, 
	then let $T_i$ be the one-node graph on $\{r_i\}$ and let $\sigma_i:V(C_i)\rightarrow \{r_i\}$ be the uniquely possible function.
	\end{itemize}
	
	We obtain a new decomposition $(T, \sigma)$ of $G$ 
	by taking the disjoint union of $T_i$'s and adding a new node $r$ and adding edges $rr_i$ for all $i\in \{1, 2, \ldots, t\}$. For every vertex $v$ of $G$, define $\sigma(v)=\sigma_i(v)$ if $v$ is a vertex of $C_i$.
	Then $(T, \sigma)$ has depth at most $m-1$ and width at most $m-2$.
	This contradicts the assumption that $G$ has rank-depth $m$. 
	
	We conclude that $G$ has a connected component of rank-depth at least $m-1$.
	\end{proof}

	The following lemma can be proven similarly to Lemma~\ref{lem:component}.
	For a graph $G$ of rank-depth $m$ and a non-empty vertex set $A$, 
	it is easy to check that $G-A$ has rank-depth at least $m-\abs{A}$, 
	and by Lemma~\ref{lem:component}, $G-A$ has a connected component of rank-depth at least $m-\abs{A}-1$.
	But, by a direct argument, we can guarantee that there is a connected component of $G-A$ of rank-depth at least $m-\abs{A}$.  We include the full proof for completeness.
	
	\begin{lemma}\label{lem:separation}
	Let $G$ be a graph of rank-depth $m$ and $A$ be a non-empty proper subset of $V(G)$.
	Then $G-A$ has a connected component of rank-depth at least $m-\abs{A}$.
  \end{lemma}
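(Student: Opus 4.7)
The plan is to argue by contradiction, strengthening the construction of Lemma~\ref{lem:component}. If $\abs{A}\ge m$ the conclusion is vacuous, since $A$ is a proper subset (so $G-A$ has at least one component) and rank-depth is always nonnegative; I will therefore assume $k:=\abs{A}\le m-1$ and suppose for contradiction that every component of $G-A$ has rank-depth at most $m-k-1$. Fix $(m-k-1,m-k-1)$-decompositions $(T_i,\sigma_i)$ of each component $C_i$ with centers $r_i$ (using the trivial one-node tree for singleton components), and aim to stitch them together with $A$ into an $(m-1,m-1)$-decomposition of $G$, contradicting rank-depth $m$.

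The construction I have in mind introduces a new node $c$ joined to every $r_i$, together with a caterpillar $c-q_1-q_2-\cdots-q_{k-1}$ where each $q_j$ carries the leaf $a_j$ and where $q_{k-1}$ additionally carries the leaf $a_k$; when $k=1$ this degenerates to attaching $a_1$ directly to $c$. Centered at $c$, the farthest leaf inside some $T_i$ sits at distance $1+(m-k-1)=m-k$ and the farthest $A$-leaf at distance at most $k$, so the radius is $\max(m-k,k)\le m-1$.

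The main work will be verifying the width bound $m-1$ at each non-leaf node. At a spine node $q_j$ the parts of $\mathcal{P}_{q_j}$ are $\{a_j\}$, the suffix $\{a_{j+1},\ldots,a_k\}$, and one large ``$c$-side'' part; any union of these either has size at most $k-j+1$ or has complement of that size, so its cut-rank is at most $k\le m-1$. At an internal node $v$ of some $T_i$, any union of parts of $\mathcal{P}_v$ avoiding the ``outside'' piece is some $Z\subseteq V(C_i)$, and the matrix $A(G)[Z,V(G)\setminus Z]$ decomposes as a horizontal concatenation of $A(C_i)[Z,V(C_i)\setminus Z]$, a block supported on columns in $A$, and zero columns for the other components; this yields $\cutrk_G(Z)\le \cutrk_{C_i}(Z)+\abs{A}\le (m-k-1)+k=m-1$, and unions that contain the outside piece are handled by complementation.

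The hard part will be the width at $c$ itself, where a naive submodular bound only gives $2k$, which is too large once $k>m/2$. The key observation is that the parts of $\mathcal{P}_c$ are precisely $V(C_1),\ldots,V(C_t),A$, so any union $X$ either lies entirely in $V(G)\setminus A$ or contains all of $A$. In the first case, since distinct components of $G-A$ are mutually nonadjacent in $G$, every nonzero entry of $A(G)[X,V(G)\setminus X]$ lands in a column indexed by $A$, giving $\cutrk_G(X)\le \abs{A}=k$; in the second case the same bound applies to the complement. With every width bounded by $m-1$, the construction is an $(m-1,m-1)$-decomposition of $G$, contradicting the assumption on rank-depth.
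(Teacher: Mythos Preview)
Your argument is correct and follows the same overall strategy as the paper: assume for contradiction that every component of $G-A$ has rank-depth at most $m-k-1$, glue the component decompositions together at a new central node, attach the vertices of $A$, and check that the result is an $(m-1,m-1)$-decomposition.

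The only real difference is the caterpillar you hang off $c$ to carry the $A$-leaves. The paper simply attaches each vertex of $A$ as its own leaf directly to the central node $r$, obtaining radius $m-\abs{A}\le m-1$ and asserting width at most $m-1$ without further comment. Your caterpillar is motivated by a fear that, with the $a_i$ as separate parts at $c$, some union $X$ mixing components and a proper subset of $A$ might have large cut-rank; but in fact the same block argument you use elsewhere shows
\[
\cutrk_G(X)\le \abs{A\cap X}+\abs{A\setminus X}=\abs{A},
\]
since the block indexed by rows in $X\setminus A$ and columns in $(V(G)\setminus X)\setminus A$ is zero. So the paper's star already satisfies the width bound at the central node, and the caterpillar (while correct) is an avoidable detour that only worsens the radius from $m-k$ to $\max(m-k,k)$.
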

	\begin{proof}
	If $\abs{A}\ge m$, then any connected component has rank-depth at least zero.
	Thus, we may assume that $\abs{A}<m$. This implies that $m \ge 2$ as $A$ is non-empty.
	
	Suppose for contradiction that 
	every connected component of $G-A$ has rank-depth at most $m-\abs{A}-1$.
	Let $C_1, C_2, \ldots, C_t$ be the connected components of $G-A$.
	For each $i\in \{1, 2, \ldots, t\}$, 
	\begin{itemize}
	\item if $C_i$ contains at least two vertices, then 
	we take an $(m-\abs{A}-1, m-\abs{A}-1)$-decomposition $(T_i, \sigma_i)$ where $r_i$ is a node of $T_i$ having distance at most $m-\abs{A}-1$ to every node of $T_i$, and
	\item if $C_i$ consists of one vertex, 
	we set $T_i$ to be the one-node graph on $\{r_i\}$ and let $\sigma_i:V(C_i)\rightarrow \{r_i\}$ be the uniquely possible function.
	\end{itemize}
	
	We obtain a new decomposition $(T, \sigma)$ of $G$ 
	by taking the disjoint union of $T_i$'s and adding a new node $r$ and adding edges $rr_i$ for all $i\in \{1, 2, \ldots, t\}$, 
	and additionally appending $\abs{A}$ leaves to $r$ and assigning each vertex of $A$ to a distinct leaf with the map $\sigma$.
	 For every vertex $v$ of $G-A$, define $\sigma(v)=\sigma_i(v)$ if $v$ is a vertex of $C_i$.
	Then $(T, \sigma)$ has depth at most $m-\abs{A}$ and width at most $m-1$.
	Because $\abs{A}\ge 1$, this contradicts the assumption that $G$ has rank-depth $m$. 
	
	We conclude that $G-A$ has a connected component of rank-depth at least $m-\abs{A}$.
	\end{proof}
	
		\begin{lemma}\label{lem:merge}
	Let $m$ and $d$ be positive integers.
	Let $G$ be a graph with a vertex partition $(A,B)$ such that connected components of $G[A]$ and $G[B]$ have rank-depth at most $m$ and $\cutrk_G(A)\le d$.
	Then $G$ has rank-depth at most $m+d+1$.
	\end{lemma}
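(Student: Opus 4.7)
The plan is to build an $(m+d+1,m+d+1)$-decomposition of $G$ by stacking a three-node spine on top of decompositions of the individual connected components of $G[A]$ and $G[B]$. For each component $C$ of $G[A]$ or $G[B]$, I fix an $(m,m)$-decomposition $(T_C,\sigma_C)$ together with a central node $r_C\in V(T_C)$ at distance at most $m$ from every node of $T_C$; if $\abs{V(C)}=1$, I take $T_C$ to be the one-node tree consisting of $r_C$. I then form $T$ by introducing three new nodes $r,r_A,r_B$, joining $r$ to both $r_A$ and $r_B$, joining $r_A$ to $r_C$ for each component $C$ of $G[A]$, and joining $r_B$ to $r_C$ for each component $C$ of $G[B]$; the bijection $\sigma$ is the common extension of the $\sigma_C$. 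The node $r$ is within distance $m+2$ of every node of $T$, so the radius of $T$ is at most $m+2\le m+d+1$.

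The heart of the argument is a single cut-rank inequality: for every component $C$ of $G[A]$ and every $U\subseteq V(C)$,
\[
\cutrk_G(U)\le \cutrk_{G[C]}(U)+d,
\]
and analogously with the roles of $A$ and $B$ swapped. To see this, split the columns of $A(G)[U,V(G)\setminus U]$ according to $V(C)\setminus U$, $A\setminus V(C)$, and $B$. The middle block is the zero matrix because $C$ is a connected component of $G[A]$; the third block is a submatrix of $A(G)[A,B]$, whose rank is $\cutrk_G(A)\le d$; and the first block contributes rank at most $\cutrk_{G[C]}(U)$.

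With this in hand, I would verify the width at each non-leaf node of $T$ in turn. At a non-leaf $v\in V(T_C)$, the partition $\mathcal{P}_v$ consists of the parts of $T_C-v$ lying inside $V(C)$ together with one merged ``outside'' part containing $V(G)\setminus V(C)$; a union of parts of the first type has cut-rank at most $m+d$ by the inequality above combined with the width bound of $(T_C,\sigma_C)$ at $v$, and unions involving the outside part reduce to the first case via $\cutrk_G(S)=\cutrk_G(V(G)\setminus S)$. At $r_A$ the partition is $\{V(C_1),\ldots,V(C_s),B\}$ with $C_1,\ldots,C_s$ the components of $G[A]$; any union $U$ of some $V(C_i)$'s satisfies $A(G)[U,A\setminus U]=0$ and $\rank A(G)[U,B]\le d$, giving $\cutrk_G(U)\le d$, and the same argument applies to $r_B$. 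At $r$ the partition is simply $\{A,B\}$, with cut-rank $\cutrk_G(A)\le d$.

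Altogether the width is at most $m+d$ and the radius at most $m+2$, so $(T,\sigma)$ is an $(m+d+1,m+d+1)$-decomposition of $G$ and the rank-depth of $G$ is at most $m+d+1$. The one place that requires care is the cut-rank computation inside the component subtrees; once one notices that the middle block of $A(G)[V(C),V(G)\setminus V(C)]$ vanishes by the connectedness of $C$ in $G[A]$, the rest is a routine check node by node.
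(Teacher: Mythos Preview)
Your proof is correct and follows essentially the same construction as the paper: glue optimal decompositions of the components of $G[A]$ and $G[B]$ onto a short central spine (the paper uses two hub nodes $x,y$ where you use three, $r_A,r,r_B$, a cosmetic difference yielding the same radius bound $m+2$). Your explicit cut-rank inequality $\cutrk_G(U)\le \cutrk_{G[C]}(U)+d$ and node-by-node width check actually fill in details that the paper's proof merely asserts.
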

	\begin{proof}
	Let $C_1, \ldots, C_p$ be the connected components of $G[A]$, 
	and $D_1, \ldots, D_q$ be the connected components of $G[B]$.
	For each $i\in \{1, 2, \ldots, p\}$, 
	\begin{itemize}
	\item if $C_i$ contains at least two vertices, then 
	we take an $(m,m)$-decomposition $(T_i, \sigma_i)$ where $r_i$ is a node of $T_i$ having distance at most $m$ to every node of $T_i$, and
	\item if $C_i$ consists of one vertex, 
	then set $T_i$ as the one-node graph on $\{r_i\}$ and $\sigma_i:V(C_i)\rightarrow \{r_i\}$ as the uniquely possible function.
	\end{itemize}
	Similarly, we define $(F_j, \mu_j)$ for each $D_j$ where $f_j$ is a node of $F_j$ having distance at most $m$ to every node of $F_j$.
	
	Now, we obtain a new decomposition $(T, \sigma)$ of $G$ as follows.  Let $T$ be the tree obtained by taking the disjoint union of all of $T_i$'s and $F_j$'s, adding new vertices $x$ and $y$, an edge $xy$, 
	edges $xr_i$ for all $i\in \{1, \ldots, p\}$, and
	edges $yf_j$ for all $j\in \{1, \ldots, q\}$.
	Define $\sigma(v)=\sigma_i(v)$ if $v$ is a vertex of $C_i$, 
	and $\sigma(v)=\mu_j(v)$ if $v$ is a vertex of $D_j$.
	Then $(T, \sigma)$ has depth at most $m+2$ and width at most $m+d$.
	Because $d\ge 1$, $G$ has rank-depth at most $\max \{m+2, m+d\}\le m+d+1$. 
\end{proof}

\subsection{Rank-width}\label{sec:rankwidth}
We now review the definition of rank-width.
		A \emph{rank-decomposition} of a graph $G$ is a pair $(T, L)$ of a tree $T$ whose vertices each have degree either one or three, and a bijection $L$ from $V(G)$ to the set of leaves of $T$. 
		The \emph{width} of an edge $e$ of $T$ is the cut-rank in $G$ of the set of all leaves assigned to one of the components of $T-e$. 
		The \emph{width} of the rank-decomposition $(T, L)$ is the maximum width of an edge of $T$.
		Finally, the \emph{rank-width} of $G$ is the minimum width over all rank-decompositions of $G$. 
		Graphs with at most one vertex do not admit rank-decompositions and we define their rank-width to be zero.

\section{The proof}\label{sec:proof}

	We write $R(n;k)$ to denote the minimum number $N$ such
	that every coloring of the edges of $K_N$ with $k$ colors induces a monochromatic complete subgraph on $n$ vertices. The classical theorem of Ramsey~\cite{Ramsey1930} implies that $R(n;k)$ exists.

        The following lemma is well known. We include its proof for the sake of completeness.
	\begin{lemma}\label{lem:balance}
          Let $G$ be a graph of rank-width at most $q$
          and let $M\subseteq V(G)$.
          If $\abs{M}\ge 3k+1$ for a positive integer $k$,
          then there is a vertex partition $(X,Y)$ of $G$ such that 
	$\cutrk_H(X)\le q$ and
        $\min(\abs{M\cap X},\abs{M\cap Y})>k$.
      \end{lemma}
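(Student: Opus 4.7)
The plan is to use a centroid argument on the rank-decomposition tree, exploiting the fact that every internal node in a rank-decomposition has degree exactly three.

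First, I would fix a rank-decomposition $(T,L)$ of $G$ of width at most $q$, and assign weight $1$ to each leaf $L(v)$ with $v\in M$ and weight $0$ to every other leaf. The total weight is then $\abs{M}\ge 3k+1\ge 4$. Next, I would pick a centroid $v$ of $T$ with respect to this weighting, meaning a node such that every component of $T-v$ carries total weight at most $\abs{M}/2$; such a vertex exists in any finite weighted tree. Since deleting any leaf of $T$ leaves a single component of weight at least $\abs{M}-1 > \abs{M}/2$, the centroid $v$ cannot be a leaf and therefore has degree $3$. Let $m_1\ge m_2\ge m_3$ denote the weights of the three components of $T-v$, so $m_1+m_2+m_3=\abs{M}$ and $m_i\le \abs{M}/2$ for each $i$.

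From the bound $m_1\ge \abs{M}/3 > k$ together with $m_2+m_3=\abs{M}-m_1\ge \abs{M}/2\ge (3k+1)/2 > k$, the edge of $T$ joining $v$ to the component of weight $m_1$ produces the desired partition $(X,Y)$ of $V(G)$, where $X$ is the set of vertices of $G$ whose leaves lie in that component. The width of this edge in the rank-decomposition is at most $q$, so $\cutrk_G(X)\le q$, and $\min(\abs{M\cap X},\abs{M\cap Y})=\min(m_1,m_2+m_3)=m_1>k$ as required.

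The only subtle point in the argument is verifying that the centroid cannot be a leaf of $T$, which is precisely where the hypothesis $\abs{M}\ge 3k+1$ (and hence $\abs{M}\ge 4$) is used; beyond that, this is the standard $1/3$–$2/3$ balanced separator principle applied to a subcubic tree.
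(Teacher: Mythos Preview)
Your proof is correct and follows essentially the same approach as the paper: both arguments locate a balanced internal node in a rank-decomposition of width at most $q$ and use the fact that such a node has degree three to bound each piece by $|M|/2$ (and the largest piece below by $|M|/3$). The paper phrases this via the standard edge-orientation/sink trick---orient every edge of $T$ away from the side holding at most $k$ elements of $M$, observe that a sink would force $|M|\le 3k$---whereas you invoke the weighted centroid directly; these are two routine packagings of the same $1/3$--$2/3$ separator principle.
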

      \begin{proof}
        Suppose that there is no such vertex partition.
        Let $(T,L)$ be a rank-decomposition of width at most $q$.
        For each edge $uv$ of $T$, let us orient $e$ towards $v$
        if the component of $T-e$ containing $u$ has at most $k$ vertices in $L(M)$.
        By the assumption, every edge is oriented. Since $T$ is acyclic,
        there is a node $w$ of $T$ such that all edges of $T$ incident with $w$ are oriented towards $w$. But this implies that $\abs{M}\le 3k$, a contradiction.
      \end{proof}
	For a path $P$ with an endpoint $x$ and a graph $H$ and a non-empty subset of vertices $S\subseteq V(H)$, 
	we denote by $(P, x)+(H,S)$ the graph obtained from the disjoint union of $P$ and $H$
	by adding all edges between $x$ and $S$. We now prove our main proposition; Theorem~\ref{thm:main1} will follow quickly after.

	\begin{proposition}\label{prop:main}
	For all positive integers $a, b, t, q$,
	there exists an integer $\f(a,b,t,q)$ such that
	every graph of rank-width at most $q$ and rank-depth at least $\f(a,b,t,q)$ has
	a vertex-minor isomorphic to either $P_t$
	or $(P_a, x)+(H, S)$ where $x$ is an endpoint of $P_a$, $H$ is a connected graph
        of rank-depth at least $b$, and $S$ is a non-empty subset of $V(H)$.
	\end{proposition}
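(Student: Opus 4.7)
The plan is to prove Proposition~\ref{prop:main} by induction on $a$. For the base case $a = 1$, I set $\f(1, b, t, q) := b + 2$. By Lemma~\ref{lem:component}, $G$ has a connected component $C$ of rank-depth at least $b + 1$; picking any $x \in V(C)$ and applying Lemma~\ref{lem:separation} to $C$ with $A = \{x\}$ yields a connected component $H$ of $C - x$ of rank-depth at least $b$. Since $C$ is connected, $S := N_C(x) \cap V(H)$ is non-empty, so the induced subgraph on $\{x\} \cup V(H)$ is isomorphic to $(P_1, x) + (H, S)$.

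For the inductive step $a \geq 2$, I would set $b' := b + L$ for a sufficiently large threshold $L = L(b, t, q)$ and apply the induction hypothesis with parameters $(a - 1, b', t, q)$. Either we obtain a $P_t$ vertex-minor (and we are done), or we obtain a vertex-minor $(P_{a-1}, x_0) + (H_0, S_0)$ with $H_0$ connected of rank-depth at least $b'$ and $S_0 \neq \emptyset$; since vertex-minors do not increase rank-width, $H_0$ has rank-width at most $q$. To extend the path by one more vertex, it suffices to find $y \in S_0$ together with a connected induced subgraph $H_1$ of $H_0 - S_0$ of rank-depth at least $b$ such that $y$ is adjacent to some vertex of $V(H_1)$, because then the induced subgraph on $V(P_{a-1}) \cup \{y\} \cup V(H_1)$ is isomorphic to $(P_a, y) + (H_1, N_{H_0}(y) \cap V(H_1))$. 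In the easy case $|S_0| \leq L$, Lemma~\ref{lem:separation} applied to $H_0$ with $A = S_0$ yields a component $H_1$ of $H_0 - S_0$ of rank-depth at least $b' - |S_0| \geq b$, and the connectivity of $H_0$ supplies a vertex $y \in S_0$ adjacent to $V(H_1)$.

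The main obstacle is the hard case $|S_0| > L$, in which we must instead locate a $P_t$ vertex-minor. My strategy is to exploit the large size of $S_0$, the fact that $x_0$ is adjacent to every vertex of $S_0$, and the bounded rank-width of $H_0$: using Lemma~\ref{lem:balance} (perhaps iteratively) to partition $S_0$ into large halves joined by cut-rank at most $q$, and then applying Ramsey-type arguments on the low-rank bipartite adjacency between the halves, I would extract a large semi-induced half-graph $\K_n \tri \S_n$ or $\S_n \tri \S_n$ with $n$ much larger than $t$, yielding $P_t$ as a pivot-minor by Lemma~\ref{lem:pivotpn1} or Lemma~\ref{lem:pivotpn2}. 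Choosing $L$ sufficiently large in terms of $q$, $t$, and the relevant Ramsey function guarantees this extraction can be carried out. One subtle degenerate subcase is $S_0 = V(H_0)$, where $H_0 - S_0$ is empty; I would handle this by first performing a local complementation at a vertex $v \in S_0$ of large degree in $H_0$, which replaces $S_0$ by the proper subset $S_0 \triangle N_{H_0}(v)$ and $H_0$ by $H_0 * v$, and then continuing with the main analysis on the updated pair.
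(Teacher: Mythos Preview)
Your base case $a=1$ and the ``easy case'' $|S_0|\le L$ of the inductive step are fine and match the paper. The gap is in your hard case $|S_0|>L$: the proposed extraction of a large half-graph from a low cut-rank partition cannot work. The bipartite part of $\S_n\tri\S_n$ (or $\K_n\tri\S_n$) is the $n\times n$ staircase matrix, which has rank $n$ over $\mathrm{GF}(2)$. If you split $S_0$ by a cut of rank at most $q$, then any half-graph with one side in each part has size at most $2^q$, because the rows of the bipartite adjacency matrix fall into at most $2^q$ twin classes. Neither the extra vertex $x_0$ (a single vertex, contributing at most one row) nor local complementation at $x_0$ (which changes the bipartite rank by at most one) rescues this. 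So a large $S_0$ together with bounded rank-width simply does not force a long $P_t$ in the way you describe, and there is no evident alternative route from your setup to either outcome.

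The paper's argument is structurally different: instead of invoking the induction hypothesis once with a boosted $b'$, it applies it $r=R(u+1;2^{a-1})$ times in succession, producing nested pairs $(A_i,x_i,H_i,S_i)$ with each $A_i\cong P_{a-1}$ sitting outside $H_i$. The key asymmetry is temporal: for $i<j$, the vertices of $A_i-x_i$ have no neighbours in $H_i\supseteq V(A_j)$, while $x_i$ may. Ramsey on the $2^{a-1}$ possible adjacency patterns of $x_i$ into $A_j$ then yields either a genuine half-graph among the $A_i$'s (giving $P_t$ via Lemmas~\ref{lem:pivotpn1}--\ref{lem:pivotpn2}) or a large set $M=\{x_i\}$ of pairwise ``clean'' anchors. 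Only at this point does the balanced partition (Lemma~\ref{lem:balance}) enter, applied to $M$ inside $H_r$; two anchors with identical neighbourhood in a large-rank-depth component $Q$ are found by pigeonhole on the $\le 2^q$ row types, and a single pivot reduces one of them to having a unique neighbour, from which the length-$a$ path is read off. The iterated application of the hypothesis---not a single boosted call---is the missing idea.
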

	\begin{proof}
	For all positive integers $b, t, q$, 
	we set 
	\begin{align*}
	f(1, b, t, q):=b+2,
	\end{align*} 
	and
	for $a\ge 2$, 
	we set
	\begin{align*}
          u&:=\max (3\cdot (2^q-1)+1, t-1),\\
          r&:=R( u+1 ;2^{a-1}),\\
          g_i&:=\begin{cases}
            b+q+2&\text{if }i=r,\\
            f(a-1,g_{i+1},t,q)& \text{if }i\in\{0,1,2,\ldots,r-1\},
            \end{cases}\\
          \f(a,b,t, q)&:=g_0.
	\end{align*}	
	We prove the proposition by induction on $a$.
  Let $G$ be a graph whose rank-depth is at least $f(a,b,t,q)$ and rank-width is at most $q$.
  If $a=1$, then it has a component $G'$ of rank-depth at least $b+1$ by
  Lemma~\ref{lem:component}. 
	Let $v\in V(G')$.
	By Lemma~\ref{lem:separation}, 
	$G'-v$ has a connected component $H$ of rank-depth at least $b$.
	So, $(G'[\{v\}], v)+(H, N_G(v)\cap V(H))$ is the second outcome.
	
	Thus, we may assume that $a\ge 2$. 
	Suppose that $G$ has no vertex-minor isomorphic to $P_t$.
        We claim that $G$ contains the second outcome.

        \begin{figure}
          \centering
          \tikzstyle{v}=[circle, draw, solid, fill=black, inner sep=0pt, minimum width=3pt]
        \begin{tikzpicture}
		\def \h {1.2} \def \w {1.2} 
          \foreach \i in  {1,2,3} {%
           	\node[v,label=left:{$A_\i$}] at (0, -\i*\h) (1\i){};
		\node[v] at (\w, -\i*\h) (2\i){};
		\node[v] at (2*\w, -\i*\h) (3\i){};
		\node[circle,draw,solid, fill=gray, inner sep=0pt,minimum width=5pt, label=right:{$x_\i$}] at (3*\w, -\i*\h) (4\i){};
          }
	\foreach \i in {1,...,3}{%
		\FPeval{\result}{clip(\i+1)}
		\foreach \j in {1,...,3}{
	    		\draw (\i\j) -- (\result\j);
	}}
	\draw (41) -- (12);%
	\draw (41) -- (42);
	\draw (42) -- (23);
	\draw (42) -- (33);
	\draw[thick] (4*\w+1.6,-2*\h) circle (1.7) {};
	\node[draw=none,label=center:{$H_3$}] at (4*\w+1.6,-2*\h) {};
	\draw (41) -- (4*\w,-2*\h+.6);
	\draw (42) -- (4*\w,-2*\h+.6);
	\draw (42) -- (4*\w,-2*\h-.6);
	\draw (43) -- (4*\w-.1,-2*\h-.1);
	\draw (43) -- (4*\w+.8,-2*\h-1.5);
        \end{tikzpicture}
          \caption{The graph $G_1[A_1 \cup \cdots\cup A_r \cup V(H_r)]$, when $r=3$ and $a=5$.}
          \label{fig:AxHS}
        \end{figure}
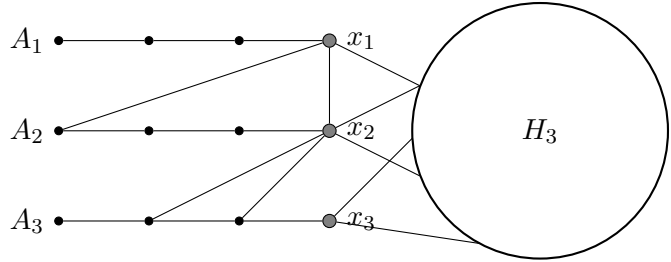
	Let $H_0:=G$. Observe that $H_0$ has rank-depth at least $\f(a,b,t,q)=g_0$.

	For $i\in \{1, 2, \ldots, r\}$,
	we recursively find tuples $(A_i, x_i, H_i, S_i)$ from $H_{i-1}$
	such that 
	\begin{itemize}
	\item $A_i$ is isomorphic to $P_{a-1}$ and $x_i$ is an endpoint of $A_i$, 
	\item $H_i$ is a connected graph of rank-depth at least $g_i$,
	\item $S_i$ is a non-empty subset of $V(H_i)$, and 
	\item $(A_i, x_i)+(H_i, S_i)$ is a vertex-minor of $H_{i-1}$.
	\end{itemize}
	Let $i\in \{1, 2, \ldots, r\}$ and assume that 
	$H_{i-1}$ is a given graph of rank-depth at least $g_{i-1}$.
	Then by the induction hypothesis,
	$H_{i-1}$ has a vertex-minor $(A_i, x_i)+(H_i, S_i)$ where
	$A_i$ is isomorphic to $P_{a-1}$, $x_i$ is an endpoint of $A_i$, $H_i$ is a connected graph of rank-depth at least $g_i$, and $S_i$ is a non-empty subset of $V(H_i)$.
	By the choice of functions $g_0, g_1, \ldots, g_r$, we can obtain the tuples for all $i\in \{1, 2, \ldots, r\}$.

	Observe that for $i< j$, no vertex in $V(A_i)\setminus \{x_i\}$ has a neighbor in $H_{i}$, 
	and therefore, the sequence of local complementations to obtain $(A_j, x_j)+(H_j, S_j)$ from $H_{j-1}$ does not change previous paths $A_1, \ldots, A_{j-1}$, 
	but may change the edges between $x_1$, $x_2$, $\ldots$, $x_{j-1}$.

	 By definition, 
	 $H_r$ is connected and has rank-depth at least $g_r$.
	Let $G_1$ be the graph obtained from $G$ by following the sequence of local complementations to obtain $(A_1, x_1)+(H_1, S_1), \ldots, (A_r, x_r)+(H_r, S_r)$. See Figure~\ref{fig:AxHS} for a depiction.
	Note that $G_1[V(H_i)\cup \{x_i\}]$ is connected for each $i$, as $H_i$ is connected, $S_i$ is non-empty, and we apply local complementations only inside $H_i$ to obtain later $H_j$'s.
	
	If for some $i\in \{1, 2, \ldots, r\}$, $N_{G_1}(x_i)\cap V(H_r)=\emptyset$, then 
	by taking a shortest path from $x_i$ to $V(H_r)$ in the graph $G_1[V(H_i)\cup \{x_i\}]$, 
	we can directly obtain the second outcome. 
	So, we may assume that 
	each of $\{x_1, x_2, \ldots, x_r\}$ has a neighbor in $V(H_r)$.
	
	Note that for $i<j$, only the endpoint $x_i$ in $A_i$ can have a neighbor in $A_j$ in $G_1$, 
	and therefore, there are $2^{a-1}$ possible ways of having edges between $A_i$ and $A_j$ in $G_1$.
	Since $r=R(u+1;2^{a-1})$, 
	by applying the theorem of Ramsey, 
	we deduce that there exists a subset $W\subseteq \{1, 2, \ldots, r\}$ of size $u+1$ 
	such that for all $i<j$ with $i,j\in W$,
        $\{ \ell: \text{the $\ell$-th vertex of $A_j$ is adjacent to $x_i$ in $G_1$}\}$
        are identical.
	
	If $x_i$ has a neighbor in $V(A_j-x_j)$ in $G_1$ for some $i<j$ with $i,j\in W$, 
	then $G_1$ has $\S_{u}\tri \S_{u}$ or $\S_{u}\tri \K_{u}$ as an induced subgraph.
	Since $u\ge t-1$,
	by Lemmas~\ref{lem:pivotpn1} and \ref{lem:pivotpn2},  
	$G_1$ contains a pivot-minor isomorphic to $P_t$, contradicting the assumption.
	So, for all $i<j$ with $i,j\in W$, 
	$x_i$ has no neighbors in $V(A_j-x_j)$.

	Note that $\{x_i:i\in W\}$ is an independent set or a clique in $G_1$.
	If it is an independent set, 
	then for some $i'\in W$, 
	we set 
	\begin{itemize}
		\item $G_2:=G_1$ and $W':=W\setminus \{i'\}$.
	\end{itemize}
	If $\{x_i:i\in W\}$ is a clique, then we choose a vertex $x_{i'}$ for some $i'\in W$ and locally complement at $x_{i'}$.  
	Then $\{x_i:i\in W\setminus \{i'\}\}$ becomes an independent set. 
	We set
	\begin{itemize}
	\item $G_2:=G_1*x_{i'}$ and $W':=W\setminus \{i'\}$.
	\end{itemize}
	Let $M:=\{x_i:i\in W'\}$ and $H:=G_2[V(H_r)\cup M\cup \{x_{i'}\}]$. 

	By definition, $H$ is locally equivalent to the graph $G_1[V(H_r)\cup M \cup \{x_{i'}\}]$. Thus, as the latter is connected, $H$ is also connected. Similarly, since $H_r=G_1[V(H_r)]$ has rank-depth at least $g_r$, 
        $H$ has rank-depth at least $g_r$.
	Also, note that $H$ has rank-width at most $q$ and $M$ is an independent set of size $u\ge 3\cdot (2^q-1)+1$ in $H$.
	Thus, by Lemma~\ref{lem:balance}, 
	$H$ admits a vertex partition $(X,Y)$ such that 
	$\abs{M\cap X}>2^q-1$, 
	$\abs{M\cap Y}>2^q-1$, and
	$\cutrk_H(X)\le q$.

	Since $H$ has rank-depth at least $g_r=b+q+2$ and $\cutrk_H(X)\le q$, 
	by Lemma~\ref{lem:merge},
        $H[X]$ or $H[Y]$ has a connected component of rank-depth at least $b+1$.  
	Without loss of generality, we assume that $H[X]$ has a connected component $Q$ of rank-depth at least $b+1$.
		
		Now, if $M\cap Y$ has a vertex $x_i$ that has no neighbor in $Q$, 
		then by taking a shortest path from $x_i$ to $Q$ in $H$, along with $A_i$, 
		we can find the second outcome.
		
		Thus, we may assume that in $H$, all vertices in $M\cap Y$ have a neighbor in $Q$.
		Since $\cutrk_H(X)\le q$,
		there are at most $2^q-1$ distinct non-zero rows in the matrix 
		$A(H)[M\cap Y, V(Q)]$.
		As $\abs{M\cap Y}\ge 2^q$, 
		by the pigeon-hole principle, 
		$H$ has  two vertices $x_{i_1}$ and $x_{i_2}$ in $M\cap Y$
                for some $i_1, i_2\in W'$ 
		that have the same neighborhood in $Q$.
		
		First assume that $x_{i_1}$ has exactly one neighbor in $Q$, say $w$.
		As $Q$ has rank-depth at least $b+1$, 
		$Q-w$ has a connected component $Q'$ having rank-depth at least $b$
                by Lemma~\ref{lem:separation}.
		Then 
		\[(G_2[V(A_{i_1})\cup \{w\}], w)+(Q', N_{G_2}(w)\cap V(Q'))\] is the required second outcome.
		So, we may assume that $x_{i_1}$ has at least two neighbors in $Q$.
		Let $w$ be a neighbor of $x_{i_1}$ in $Q$.
		
	Since $x_{i_1}$ and $x_{i_2}$ have the same neighborhood in $Q$ and they are not adjacent, 
	if we pivot $x_{i_2}w$, then the edges between $x_{i_1}$ and $N_H(x_{i_1})\cap V(Q)$ are removed and 
	$x_{i_2}$ becomes the unique neighbor of $x_{i_1}$ in $V(Q)\cup \{x_{i_2}\}$.
	Note that $G_2[V(Q)\cup \{x_{i_2}\}]$ is connected, and thus  
	$(G_2\pivot x_{i_2}w)[V(Q)\cup \{x_{i_2}\}]$ is also connected.
	As $Q$ has rank-depth at least $b+1$, 
	$(G_2\pivot x_{i_2}w)[V(Q)\cup \{x_{i_2}\}] - x_{i_2}$ has a connected component $Q'$
	that has rank-depth at least $b$.
	Then \[((G_2\pivot x_{i_2}w)[V(A_{i_1})\cup \{x_{i_2}\}], x_{i_2})+(Q', N_{G_2\pivot x_{i_2}w}(x_{i_2})\cap V(Q'))\]
	is the second outcome.
	This proves the proposition.
		\end{proof}

	Proposition~\ref{prop:main} implies the following result.
	\begin{theorem}\label{thm:boundedrw}
	For all positive integers $t$ and $q$, there exists an integer $F(t, q)$ such that every graph of rank-width at most $q$ and rank-depth at least $F(t,q)$ contains a vertex-minor isomorphic to $P_t$.	
	\end{theorem}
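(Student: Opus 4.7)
The plan is to obtain Theorem~\ref{thm:boundedrw} as an almost immediate corollary of Proposition~\ref{prop:main} by choosing appropriate parameters. Specifically, I would define $F(1, q) := 1$ (since $P_1$ is a single vertex and is trivially a vertex-minor of any non-empty graph) and $F(t, q) := f(t-1, 1, t, q)$ for $t \geq 2$.

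For $t \geq 2$, let $G$ be a graph of rank-width at most $q$ and rank-depth at least $F(t,q) = f(t-1, 1, t, q)$. Apply Proposition~\ref{prop:main} with parameters $a := t-1$, $b := 1$, and the given $t$ and $q$ (both $a$ and $b$ are positive since $t \geq 2$, so the hypothesis of the proposition is satisfied). This yields one of two outcomes: either $G$ has a vertex-minor isomorphic to $P_t$, which is the desired conclusion, or $G$ has a vertex-minor isomorphic to $(P_{t-1}, x) + (H, S)$, where $x$ is an endpoint of $P_{t-1}$, $H$ is a connected graph of rank-depth at least $1$, and $S$ is a non-empty subset of $V(H)$.

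In the second case, I would extract a $P_t$ vertex-minor as follows. Pick any vertex $s \in S$ and consider the subgraph of $(P_{t-1}, x) + (H, S)$ induced on $V(P_{t-1}) \cup \{s\}$. By the definition of the construction $(P_{t-1}, x) + (H, S)$, the only edges between $V(P_{t-1})$ and $V(H)$ are those joining $x$ to vertices of $S$; in particular, no vertex of $V(P_{t-1}) \setminus \{x\}$ is adjacent to $s$. Hence the induced subgraph consists precisely of the $t-2$ edges of $P_{t-1}$ together with the single edge $xs$, forming an induced path on $t$ vertices. Since induced subgraphs of vertex-minors are themselves vertex-minors (by transitivity of the vertex-minor relation), $G$ has a vertex-minor isomorphic to $P_t$.

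There is no genuine obstacle in this deduction: all of the work is already done in Proposition~\ref{prop:main}. The only subtlety is the need to treat $t = 1$ separately, since the proposition requires positive integer parameters, and to verify that the short argument in the final paragraph produces an \emph{induced} $P_t$ rather than a path with possible chords — which it does, thanks to the precise definition of $(P_{t-1}, x) + (H, S)$.
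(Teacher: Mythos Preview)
Your proof is correct and follows exactly the paper's approach: the paper also sets $F(t,q):=f(t-1,1,t,q)$ and leaves the rest implicit. You have simply filled in the details the paper omits---the trivial case $t=1$ and the observation that the second outcome $(P_{t-1},x)+(H,S)$ of Proposition~\ref{prop:main} already contains an induced $P_t$ on $V(P_{t-1})\cup\{s\}$ for any $s\in S$.
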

	\begin{proof}
          We take $F(t, q):=\f(t-1,1,t, q)$ where $\f$ is the function in Proposition~\ref{prop:main}. 
	\end{proof}

        A \emph{circle graph} is
        the intersection graph of chords on a circle.
        It is easy to see that $P_t$ is a circle graph.
	We can derive Theorem~\ref{thm:main1} by taking $q=\beta(P_t)$ from the following recent theorem.

	\begin{theorem}[Geelen, Kwon, McCarty, and Wollan~\cite{GKMW2019}]\label{thm:gridtheorem}
	For every circle graph $H$, there exists an integer $\beta(H)$ such that 
	every graph of rank-width more than $\beta(H)$ contains a vertex-minor isomorphic to $H$.
	\end{theorem}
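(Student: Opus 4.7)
The plan is to prove a vertex-minor grid theorem by a two-step reduction: first identify a canonical family of ``large'' circle graphs that is universal for the whole class under the vertex-minor relation, and then show that any graph of sufficiently large rank-width must contain a member of this canonical family as a vertex-minor. The natural candidate for the canonical family is the \emph{comparability grid} $\Gamma_n$, which is the circle graph obtained from $2n$ pairs of nested/crossing chords arranged in a regular $n \times n$ pattern. So I would first prove the combinatorial lemma that for every circle graph $H$ on $h$ vertices there is an $n = n(h)$ such that $H$ is a vertex-minor of $\Gamma_n$. This reduction step is purely about circle graph representations: given any chord diagram for $H$, one can embed it inside the chord diagram for $\Gamma_n$ after a sequence of local complementations (which, for circle graphs, correspond to well-understood Whitney-flip-like operations on chord diagrams), and then delete unused chords.

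Having reduced the problem to producing a large $\Gamma_n$ as a vertex-minor, the second and main step is to show that every graph $G$ of rank-width greater than $\beta(n)$ contains $\Gamma_n$ as a vertex-minor. The plan here mirrors the tangle-based proof of the Robertson--Seymour grid theorem. First, define an appropriate notion of a tangle in terms of the cut-rank function: a collection of low-cut-rank vertex subsets that is ``consistent'' in the sense that no three of them cover $V(G)$. Large rank-width produces a tangle of large order, and this tangle provides a robust notion of ``where the bulk of $G$ lies'' relative to any small separation. Then one extracts the comparability grid by an inductive process: one builds up a sequence of disjoint ``rows'' of vertices, each row witnessing high connectivity in the tangle sense, and uses pivoting to line the rows up into the prescribed comparability pattern. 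Because pivoting preserves bipartiteness and is compatible with the matroidal structure of cut-rank, one can analyze the construction partially by passing to the associated binary matroid and exploiting Geelen--Gerards--Whittle-style matroid minor theory for the bipartite case, then lifting back.

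The hard part will be controlling interactions between successive rounds of local complementation and pivoting: an operation used to straighten out the $k$-th row can scramble the edges among the previous $k-1$ rows in uncontrollable ways. The remedy, which I expect to be the technically demanding core of the proof, is to separate the workspace from the already-built structure by means of the tangle. Concretely, each new row is drawn from deep inside the tangle relative to the union of earlier rows, so local complementations performed on it only affect vertices lying inside a low-cut-rank ``shadow,'' which can be absorbed into a careful bookkeeping of pending rank. A secondary obstacle is the circle-graph-specific content of the canonical family: one must check that $\Gamma_n$ really is rich enough to contain every circle graph on at most $n$ vertices as a vertex-minor, which requires a nontrivial analysis of chord diagrams and their behavior under local complementation. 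Together, these two ingredients yield the function $\beta(H)$ and establish Theorem~\ref{thm:gridtheorem}.
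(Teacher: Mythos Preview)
This theorem is not proved in the present paper at all: it is quoted from Geelen, Kwon, McCarty, and Wollan~\cite{GKMW2019} and used as a black box. The paper invokes it only once, in the two-line derivation of Theorem~\ref{thm:main1} from Theorem~\ref{thm:boundedrw}, by setting $q=\beta(P_t)$ so that any graph either already has rank-width at most $\beta(P_t)$ (and then Theorem~\ref{thm:boundedrw} applies) or has rank-width exceeding $\beta(P_t)$ (and then Theorem~\ref{thm:gridtheorem} supplies a $P_t$ vertex-minor directly). There is therefore nothing in this paper for your proposal to be compared against.

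Your sketch is an attempt to reprove the main result of~\cite{GKMW2019}, which is a substantial paper in its own right and well outside the scope of the present one. As a high-level outline of that separate project it is plausible in spirit---a universality step reducing to a canonical circle-graph family, followed by a tangle-based extraction---but several details are off. In particular, the canonical object in~\cite{GKMW2019} is not a ``comparability grid'' $\Gamma_n$ as you describe; and the suggestion to ``pass to the associated binary matroid and exploit Geelen--Gerards--Whittle-style matroid minor theory'' only makes sense for bipartite graphs, whereas Theorem~\ref{thm:gridtheorem} is about arbitrary graphs, so this route cannot carry the general case. For the purposes of the present paper, however, you should simply cite the theorem and move on, exactly as the authors do.
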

        
	\begin{thm:main1}
	For every positive integer $t$, there exists an integer $N(t)$ such that every graph of rank-depth at least $N(t)$ contains a vertex-minor isomorphic to $P_t$.
	\end{thm:main1}
	\begin{proof}
          We take $N(t):=F(t, \beta(P_t))$ where $\beta$ is the function given in Theorem~\ref{thm:gridtheorem} and $F$ is the function from Theorem~\ref{thm:boundedrw}.
	If a graph has rank-width more than $\beta(P_t)$, then by Theorem~\ref{thm:gridtheorem}, 
	it contains a vertex-minor isomorphic to $P_t$.
	So, we may assume that a graph has rank-width at most $\beta(P_t)$.
	Then by Theorem~\ref{thm:boundedrw}, 
	it contains a vertex-minor isomorphic to $P_t$.
	\end{proof}
	
	\section{Pivot-minors}\label{sec:pivotminor}

	We can prove a stronger result on bipartite graphs, by slightly modifying the proof of Proposition~\ref{prop:main}.
	Suppose that a given graph $G$ is bipartite in the proof of Proposition~\ref{prop:main}.
	The only place that we have to apply local complementation
        instead of pivoting is when
	the set $\{x_i:i\in W\}$ is a clique, and we want to change it into an independent set.
	But if $G$ is bipartite, then  
	the obtained set $\{x_i:i\in W\}$ has no triangle, and so it is an independent set since $\abs{W}\ge 3$.
	Therefore, we can proceed only with pivoting.
	For bipartite graphs, we can use the following theorem due to Oum~\cite{Oum2004}, obtained as a consequence of the grid theorem for binary matroids~\cite{GGW07}.
	
	\begin{theorem}[Oum~\cite{Oum2004}]
	For every bipartite circle graph $H$, there exists an integer~$\gamma(H)$ such that 
	every bipartite graph of rank-width more than $\gamma(H)$ contains a pivot-minor isomorphic to $H$.
      \end{theorem}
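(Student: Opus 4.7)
The plan is to reduce to the grid theorem for binary matroids via the classical dictionary between bipartite graphs and binary matroids. For a bipartite graph $G$ with bipartition $(A,B)$, associate the binary matroid $M_G$ represented over $\operatorname{GF}(2)$ by $[\,I_A \mid N\,]$, where $N$ is the $A\times B$ bipartite biadjacency matrix. Under this association, pivoting an edge $uv$ of $G$ corresponds to a basis exchange of $u$ and $v$ in $M_G$, while vertex deletion in $G$ corresponds to deletion or contraction in $M_G$ depending on which side of the bipartition the vertex lies in. Consequently, the pivot-minor relation on bipartite graphs corresponds (up to choice of basis) to the matroid minor relation on binary matroids. In addition, by an earlier result of Oum, the rank-width of $G$ and the branch-width of $M_G$ differ by at most an additive constant, so a bipartite graph of large rank-width yields a binary matroid of large branch-width.

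Next, I would show that for every bipartite circle graph $H$, the associated binary matroid $M_H$ is the cycle matroid of some planar multigraph $J_H$. This goes through Bouchet's interpretation of circle graphs as interlacement graphs of Eulerian tours in $4$-regular plane graphs: the bipartite circle graphs are precisely the fundamental graphs of planar graphic matroids. Thus the task of finding $H$ as a pivot-minor of $G$ reduces to the task of finding $M(J_H)$ as a matroid minor of $M_G$.

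Finally, I would invoke the grid theorem for binary matroids of Geelen, Gerards, and Whittle: for every planar graph $J$ there is an integer $k(J)$ such that every binary matroid of branch-width at least $k(J)$ contains $M(J)$ as a minor. Setting $\gamma(H)$ to be $k(J_H)$ (plus the additive slack between rank-width and branch-width) and applying this to $J=J_H$, any bipartite graph $G$ of rank-width exceeding $\gamma(H)$ yields $M_G$ of branch-width at least $k(J_H)$, hence $M(J_H)=M_H$ as a matroid minor. Translating back via the first step produces a pivot-minor of $G$ isomorphic to $H$, which is what we need.

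The main obstacle is the middle step: carefully certifying that every bipartite circle graph corresponds, via the fundamental-graph correspondence, to a planar graphic binary matroid, and matching up the notions of minor on both sides. Bookkeeping around basis changes (needed to realize a matroid minor as a pivot-minor) and the small additive discrepancy between rank-width and branch-width are routine but must be handled cleanly. The grid theorem for binary matroids is then the heavy black box that drives the argument.
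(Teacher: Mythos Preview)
Your proposal is correct and follows exactly the route the paper indicates: the paper does not supply its own proof but cites this theorem as a result of Oum obtained as a consequence of the grid theorem for binary matroids of Geelen, Gerards, and Whittle, which is precisely the reduction you sketch via the fundamental-graph correspondence between bipartite graphs and binary matroids. The identification of bipartite circle graphs with fundamental graphs of planar (graphic) binary matroids and the equality of cut-rank with matroid connectivity are the two ingredients Oum uses, just as you describe.
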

      Thus we deduce the following theorem for bipartite graphs.
		\begin{thm:main2}
	For every positive integer $t$, there exists an integer~$N(t)$ such that every bipartite graph of rank-depth at least $N(t)$ contains a pivot-minor isomorphic to $P_t$.
  \end{thm:main2}
  
  Theorem~\ref{thm:main2} allows 
  us to obtain the following corollary for binary matroids, solving a special case of a conjecture of DeVos, Kwon, and Oum~\cite{DKO2019} on general matroids.
      We need a few terms to state the corollary. 
      The branch-depth of a matroid is defined 
      analogously to the definition of the rank-depth obtained by replacing the cut-rank function with the matroid connectivity function~\cite{DKO2019}.
      Let $F_t$ be the fan graph, that is  the union of $P_t$ with one vertex adjacent to all vertices of $P_t$, see Figure~\ref{fig:fan}. As usual, $M(F_t)$ denotes the cycle matroid of $F_t$.
      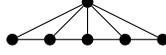
\begin{figure}
        \begin{center}
          \tikzstyle{v}=[circle,draw,fill=black,inner sep=0pt,minimum width=4pt]
        \begin{tikzpicture}
          \foreach \i in {1,2,3,4,5} {
            \draw (.5*\i,0) node [v] (v\i){};
          }
          \draw (1.5,.5) node[v] (v) {};
          \foreach \i in {1,2,3,4,5} {
            \draw (v)--(v\i);
          }
          \draw (v1)--(v5);
        \end{tikzpicture}
        \end{center}
        \caption{The fan graph $F_5$.}
                \label{fig:fan}
      \end{figure}
	\begin{corollary}\label{cor:main3}
	For every positive integer $t$, there exists an integer $N(t)$ such that every binary matroid of branch-depth at least $N(t)$ contains a minor isomorphic to $M(F_t)$.
        \end{corollary}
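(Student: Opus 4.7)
The plan is to reduce to Theorem~\ref{thm:main2} via the standard dictionary between binary matroids and the \emph{fundamental graphs} of their bases. For a binary matroid $M$ with a basis $B$, let $G_{M,B}$ be the bipartite graph on vertex set $E(M)$ with parts $B$ and $E(M)\setminus B$ in which $b \in B$ is adjacent to $e \notin B$ exactly when $b$ lies in the fundamental circuit of $e$ with respect to $B$. Two standard facts (see Oum~\cite{Oum2004}) do the heavy lifting: the connectivity function of $M$ and the cut-rank function of $G_{M,B}$ agree on every subset of $E(M) = V(G_{M,B})$; and pivoting an edge $be$ yields $G_{M, B \Delta \{b,e\}}$, while deleting a vertex $v \in B$ yields $G_{M/v, B\setminus\{v\}}$ and deleting $v \notin B$ yields $G_{M\setminus v, B}$. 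Consequently the branch-depth of $M$ equals the rank-depth of $G_{M,B}$, and every pivot-minor of $G_{M,B}$ is the fundamental graph (with respect to some basis) of a minor of $M$.

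Set $N(t) := N_2(2t)$, where $N_2$ is the function supplied by Theorem~\ref{thm:main2}. If $M$ has branch-depth at least $N(t)$, then $G_{M,B}$ has rank-depth at least $N_2(2t)$, so by Theorem~\ref{thm:main2} it admits a pivot-minor isomorphic to $P_{2t}$. By the dictionary above, there is a minor $M'$ of $M$ with a basis $B'$ such that $G_{M',B'} \cong P_{2t}$; label the vertices along the path as $v_1, v_2, \ldots, v_{2t}$. Both color classes of $P_{2t}$ have size $t$, and the endpoints $v_1, v_{2t}$ lie on opposite sides of the bipartition, so exactly one of them, call it $v^*$, lies outside $B'$. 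Deleting $v^*$ is an induced-subgraph operation on the fundamental graph and corresponds in the matroid to a deletion, producing a further minor $M''$ of $M$ whose fundamental graph is $P_{2t-1}$ with the basis side (of size $t$) containing both endpoints. Writing this basis side as $e_1, \ldots, e_t$ and the non-basis side as $f_1, \ldots, f_{t-1}$ in path order, each $f_i$ has exactly the two neighbors $e_i$ and $e_{i+1}$, so its fundamental circuit in $M''$ is $\{f_i, e_i, e_{i+1}\}$. These are precisely the fundamental circuits of the edges of $F_t$ with respect to its spanning star of spokes, and since a binary matroid is determined by any one of its fundamental graphs together with the distinguished basis side, $M'' \cong M(F_t)$.

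The main obstacle is not combinatorial but conceptual: the whole argument hinges on invoking the matroid/fundamental-graph dictionary cleanly, including the equality of the two ``depth'' parameters. The only genuine choice is applying Theorem~\ref{thm:main2} with parameter $2t$ rather than $2t-1$: extracting $M(F_t)$ requires that the \emph{larger} color class of a $P_{2t-1}$ fundamental graph play the role of the basis, and starting from $P_{2t}$ (whose two classes have equal size $t$) guarantees that one endpoint is non-basis and can be deleted to leave a sub-path with exactly the right basis side. Beyond this parity bookkeeping, the proof is essentially a transcription of Theorem~\ref{thm:main2} into matroid language.
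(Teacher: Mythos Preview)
Your argument is correct and follows the same route as the paper: translate branch-depth of a binary matroid into rank-depth of a fundamental graph, invoke Theorem~\ref{thm:main2} to obtain a long path as a pivot-minor, and read off $M(F_t)$ from the fact that $P_{2t-1}$ is a fundamental graph of $M(F_t)$ with the star of spokes as basis.

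The one point where you and the paper diverge is how the ``which colour class is the basis'' ambiguity is handled. The paper quotes the fact (Oum~\cite[Corollary~3.6]{Oum2004}) that if a fundamental graph of a connected matroid $N$ is a pivot-minor of a fundamental graph of $M$, then $N$ or $N^*$ is a minor of $M$, and then observes that $(M(F_t))^*$ already has an $M(F_{t-1})$ minor, so either outcome suffices (after bumping $t$ by one). You instead keep track of the basis side through the pivot-minor dictionary, ask Theorem~\ref{thm:main2} for $P_{2t}$ rather than $P_{2t-1}$, and delete the unique non-basis endpoint to force the larger colour class of the remaining $P_{2t-1}$ to be the basis. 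Your variant is a touch more elementary in that it sidesteps duality entirely; the paper's variant is shorter because it offloads the bookkeeping to a cited lemma. Both are perfectly valid and yield the same order of magnitude for $N(t)$.
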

        \begin{proof}
          It is known \cite{Bouchet89, Oum2004} that the connectivity function of a binary matroid is equal to the cut-rank function
          of a corresponding bipartite graph, called a \emph{fundamental graph}.
          Furthermore for two binary matroids $M$ and $N$,
          if $N$ is connected, and a fundamental graph of $N$ is a pivot-minor of a fundamental graph of $M$,
      then either $N$ or $N^*$ is a minor of $M$, see Oum~\cite[Corollary 3.6]{Oum2004}.
      Since $(M(F_t))^*$ has a minor isomorphic to $M(F_{t-1})$, we deduce the corollary from Theorem~\ref{thm:main2}, because the path graph $P_{2t-1}$ is a fundamental graph of $M(F_t)$.
        \end{proof}

    We show that the class $\{K_n\tri K_n : n\ge 1\}$ has unbounded rank-depth, while for every positive integer $n$, $K_n\tri K_n$ has no pivot-minor isomorphic to $P_5$.
    It implies that contrary to Theorem~\ref{thm:main2}, 
    the class of graphs having no $P_n$ pivot-minor has unbounded rank-depth for each $n\ge 5$.
    
    Kwon and Oum~\cite[Lemma 6.5]{KO2019} showed that for every integer $n\ge 2$, $K_n\tri K_n$ contains a vertex-minor isomorphic to $P_{2n-2}$.
    Thus, $\{K_n\tri K_n : n\ge 1\}$ has unbounded rank-depth. 
    
    Now, we show that for $n\ge 1$, $K_n\tri K_n$ has no pivot-minor isomorphic to $P_5$.
    We prove a stronger statement that $K_n\tri K_n$ has no pivot-minor isomorphic to $K_{1,3}$.
    Dabrowski et al.~\cite{Konrad2018} characterized the class of graphs having no pivot-minor isomorphic to $K_{1,3}$ in terms of forbidden induced subgraphs.
    See Figure~\ref{fig:clawpm} for $\operatorname{bull}$, $W_4$, and $\overline{BW_3}$.
       
    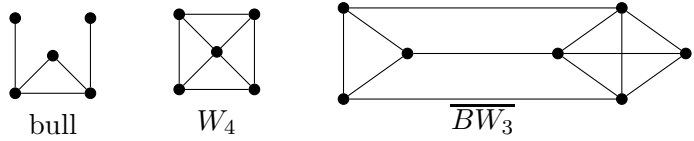
\begin{figure}
  \begin{center}
    \tikzstyle{v}=[circle,draw,fill=black,inner sep=0pt,minimum width=4pt]
  $\quad\quad$
  \begin{tikzpicture}
    \draw (0,0) node [v] (v1){};
    \draw (1,0) node [v] (v2){};
    \draw (1,1) node [v] (v3){};
    \draw (0,1) node [v] (v4){};
    \draw (.5,.5) node[v] (v) {};
    \draw (v4)--(v1)--(v)--(v2)--(v3);
    \draw (v1)--(v2);
    \draw (0.5,0) node[label=below:bull]{};
  \end{tikzpicture}
  $\quad\quad$
  \begin{tikzpicture}
    \draw (0,0) node [v] (v1){};
    \draw (1,0) node [v] (v2){};
    \draw (1,1) node [v] (v3){};
    \draw (0,1) node [v] (v4){};
    \draw (.5,.5) node[v] (v) {};
    \foreach \i in {1,2,3,4} {
      \draw (v)--(v\i);
      }
      \draw (v1)--(v2)--(v3)--(v4)--(v1);
      \draw (0.5,0) node[label=below:$W_4$]{};
  \end{tikzpicture}
  $\quad\quad$
  \begin{tikzpicture}
    \begin{scope}[xshift=-1.5cm]
      \foreach \i in {1,2} {
      \draw (120*\i:.7) node[v](a\i){};
    }
      \foreach \i in {3} {
      \draw (0:.5) node[v](a\i){};
    }
    \draw (a1)--(a2)--(a3)--(a1);
    \end{scope}
    \begin{scope}[xshift=1.5cm]
      \draw (0:1.2) node[v] (b4){};
      \foreach \i in {1,2} {
      \draw (180-120*\i:.7) node[v](b\i){};
      \draw (a\i)--(b\i)--(b4);
    }
      \foreach \i in {3} {
      \draw (180:.5) node[v](b\i){};
      \draw (a\i)--(b\i)--(b4);
    }
    \draw (b1)--(b2)--(b3)--(b1);
    \end{scope}
    \draw (-90:.4) node[label=below:$\overline{BW_3}$]{};
  \end{tikzpicture}
  \end{center}
  \caption{The graphs $\operatorname{bull}$, $W_4$, and $\overline{BW_3}$.}
          \label{fig:clawpm}
\end{figure}
    \begin{theorem}[Dabrowski et al.~\cite{Konrad2018}]\label{thm:dabrowski}
    A graph has a pivot-minor isomorphic to $K_{1,3}$ if and only if it has an induced subgraph isomorphic to one of $K_{1,3}$, $P_5$, $\operatorname{bull}$, $W_4$, and $\overline{BW_3}$.
    \end{theorem}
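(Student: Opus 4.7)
The plan is to prove the two directions separately, with the ``only if'' direction carrying almost all of the content.

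For the ``if'' direction I would verify by direct inspection that each of the five listed graphs has $K_{1,3}$ as a pivot-minor. The case of $K_{1,3}$ itself is trivial. For each of $P_5$, $\operatorname{bull}$, $W_4$, and $\overline{BW_3}$ I would exhibit an explicit short sequence of pivots and vertex deletions terminating at $K_{1,3}$. For example, in $P_5 = v_1v_2v_3v_4v_5$, pivoting the edge $v_3v_4$ toggles the adjacency $v_2v_5$ (among a few others) in a way that, together with one vertex deletion, leaves $K_{1,3}$; the remaining three graphs admit similar one- or two-step verifications.

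For the ``only if'' direction, let $\cC$ denote the class of graphs that contain none of the five listed graphs as an induced subgraph. Since $\cC$ is trivially closed under taking induced subgraphs and since $K_{1,3} \notin \cC$, it suffices to show that $\cC$ is closed under the pivot operation $G \mapsto G\pivot uv$ for any edge $uv$; iterating this together with induced-subgraph closure then forbids $K_{1,3}$ from appearing as any pivot-minor of a member of $\cC$. So I would assume $G \in \cC$ and $uv \in E(G)$, suppose for contradiction that $G\pivot uv$ contains an induced subgraph $H$ on a vertex set $S$ isomorphic to one of the five listed graphs, and derive a contradiction from $G \in \cC$. The mechanism is a case analysis keyed to the pivot rule: pivoting $uv$ only toggles edges between the three classes $N_G(u) \cap N_G(v)$, $N_G(u) \setminus (N_G(v) \cup \{v\})$, and $N_G(v) \setminus (N_G(u) \cup \{u\})$, so whether the induced subgraph on $S$ changes is determined by the placement of $u$ and $v$ relative to $S$ together with how the vertices of $S$ distribute among these three classes. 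For each of the five graphs and each such configuration, one shows that either the same copy of $H$ already appears in $G$, or else $S \cup \{u\}$ or $S \cup \{v\}$ induces in $G$ a copy of some graph from the forbidden list.

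The main obstacle is the sheer bulk of this case analysis, since $\overline{BW_3}$ has eight vertices and each of the five graphs admits many configurations relative to $\{u,v\}$. A more elegant route I would attempt first is a structural characterization of $\cC$, perhaps via a modular decomposition with a small catalogue of prime graphs (half-graphs and complete multipartite variants are natural candidates, given the appearance of $K_n\tri K_n$ in the surrounding discussion), from which closure under pivoting would follow almost automatically. Failing that, the direct case analysis is finite and mechanical, just laborious.
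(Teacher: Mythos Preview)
The paper does not prove this theorem at all; it is quoted as a result of Dabrowski et al.\ \cite{Konrad2018} and used as a black box to deduce that $K_n\tri K_n$ has no $K_{1,3}$ pivot-minor. So there is no in-paper proof to compare your proposal against.

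As for the viability of your sketch: the strategy is sound in outline. The ``if'' direction is indeed a finite check. For the ``only if'' direction, showing that the class $\cC$ defined by excluding the five induced subgraphs is closed under pivoting is exactly the right reformulation, and your description of the pivot operation in terms of the three neighbourhood classes is correct. The concern you yourself raise is the real one: the case analysis, especially for $\overline{BW_3}$, is long, and you have not actually carried it out, so this remains a plan rather than a proof. Your instinct that a structural decomposition of $\cC$ would streamline things is reasonable; in the original paper of Dabrowski et al.\ the argument does go through an explicit structural description of the $K_{1,3}$-pivot-minor-free graphs rather than a bare case analysis on the five obstructions. If you want to complete the proof yourself, pursuing that structural route is likely to be both shorter and more illuminating than the raw case split.
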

    \begin{lemma}\label{lem:knkn}
    For $n\ge 1$, $K_n\tri K_n$ has no induced subgraph isomorphic to one of $K_{1,3}$, $P_5$, $\operatorname{bull}$, $W_4$, and $\overline{BW_3}$.
    \end{lemma}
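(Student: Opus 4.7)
The plan is to reduce all five exclusions to two elementary structural facts about $G := K_n \tri K_n$.

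First, observe that $V(G)$ is the union of two cliques (the $v_i$'s and the $w_j$'s), hence $\alpha(G) \le 2$. Each of $K_{1,3}$, $P_5$, and $\mathrm{bull}$ contains an independent set of size three: the three leaves of $K_{1,3}$, the vertices at odd positions of $P_5$, and the two pendants together with the unique triangle vertex not attached to a pendant in the bull. So none of these three can occur as an induced subgraph of $G$.

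Second, I would show that $G$ has no induced $C_4$. If three of the four vertices lay on one side, that side would contribute a triangle, so an induced $C_4$ must use two vertices $v_i, v_j$ (with $i < j$) and two vertices $w_k, w_l$ (with $k < l$). Since $v_iv_j$ and $w_kw_l$ are edges, they must be two opposite edges of the $4$-cycle, the remaining two edges of the cycle form a perfect matching between $\{v_i, v_j\}$ and $\{w_k, w_l\}$, and the two diagonals are non-edges. A short case analysis using the adjacency rule ``$v_aw_b$ is an edge iff $a \ge b$'' contradicts $i<j$ or $k<l$ in both possible matchings; for instance if $v_iw_k$ and $v_jw_l$ are the edges and $v_iw_l, v_jw_k$ the non-edges, then $i \ge k$, $j \ge l$, $i < l$, $j < k$ give $j < k \le i$, contradicting $i < j$.

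Third, I would finish by exhibiting an induced $C_4$ in each of the two remaining forbidden graphs. In $W_4$, removing the universal vertex leaves $C_4$ by definition. In $\overline{BW_3}$ the set $\{a_1, a_2, b_1, b_2\}$ induces the $4$-cycle $a_1\text{--}a_2\text{--}b_2\text{--}b_1\text{--}a_1$: the edges $a_1a_2$ and $b_1b_2$ come from the two triangles, $a_1b_1$ and $a_2b_2$ are two of the matching edges, and the diagonals $a_1b_2, a_2b_1$ are non-edges. Hence both $W_4$ and $\overline{BW_3}$ contain induced $C_4$, and the second step rules them out. No step is technically hard; the only ``obstacle'' is making the $C_4$-free case analysis clean, and spotting the induced $C_4$ inside $\overline{BW_3}$.
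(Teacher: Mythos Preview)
Your proposal is correct and follows essentially the same approach as the paper: use $\alpha(K_n\tri K_n)\le 2$ to exclude $K_{1,3}$, $P_5$, and the bull, then show $K_n\tri K_n$ is $C_4$-free and note that $W_4$ and $\overline{BW_3}$ each contain an induced $C_4$. The paper phrases the $C_4$-freeness as ``the edges between the two $K_n$'s have no induced matching of size~$2$'' rather than doing the explicit case split, but this is exactly what your case analysis establishes.
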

    \begin{proof}
    As the maximum size of an independent set in $K_n\tri K_n$ is $2$, 
    $K_n\tri K_n$ has no induced subgraph isomorphic to one of $K_{1,3}$, $P_5$, and $\operatorname{bull}$.
    
    Also $K_n\tri K_n$ has no induced cycle of length $4$ because such a cycle should contain two vertices in each $K_n$
    but the edges between two $K_n$'s have no induced matching of size $2$.
    Therefore, 
    it has no induced subgraph isomorphic to $W_4$ or $\overline{BW_3}$.
    \end{proof}
    
    By Theorem~\ref{thm:dabrowski} and Lemma~\ref{lem:knkn}, 
    $K_n\tri K_n$ has no pivot-minor isomorphic to $K_{1,3}$, and to $P_5$.
    Thus, for all $n\ge 5$, the class of graphs having no $P_n$ pivot-minor includes $\{K_n\tri K_n : n\ge 1\}$, which has unbounded rank-depth.
    It may be interesting to see whether every graph with sufficiently large rank-depth contains either $P_n$ or $K_n\tri K_n$ as a pivot-minor.
    We leave it as an open question.
    \begin{question}
    Does there exist a function $f$ such that for every $n$, every graph with rank-depth at least $f(n)$ contains a pivot-minor isomorphic to $P_n$ or $K_n\tri K_n$?
    \end{question}

        \section{Concluding remarks}\label{sec:remark}
 
        \subsection{Linear $\chi$-boundedness}\label{subsec:chi}

        We define linear rank-width.
	For an ordering $(v_1, v_2, \ldots, v_n)$ of the vertex set of a graph $G$, 
	its \emph{width} is defined as the maximum of $\cutrk_G(\{v_1, \ldots, v_i\})$ for all $i\in \{1, 2, \ldots, n-1\}$, 
	and the \emph{linear rank-width} of $G$ is defined as the minimum width of all orderings of $G$.
  If $\abs{V(G)}<2$, then the linear rank-width of $G$ is defined as $0$.

  Graphs of bounded rank-depth have bounded linear rank-width, which was already known through the notions of shrub-depth and linear clique-width~\cite{GHNOO2017}. Kwon and Oum~\cite{KO2019a} proved it directly as follows.

  \begin{proposition}[Kwon and Oum~\cite{KO2019a}]\label{prop:lrwbound}
 Every graph of rank-depth $k$ has linear rank-width at most~$k^2$.
\end{proposition}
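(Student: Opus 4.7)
The plan is to read off a linear ordering of $V(G)$ from a depth-first search of an optimal rank-depth decomposition rooted at a radius-witness, and then to bound the cut-rank of each initial segment by summing width bounds along a single root-to-leaf path of the decomposition tree.

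I would start from a $(k,k)$-decomposition $(T,\sigma)$ of $G$ and fix a node $r$ of $T$ at distance at most $k$ from every other node. Rooting $T$ at $r$, fixing an arbitrary ordering of the children of each internal node, and performing a depth-first traversal produces an ordering of the leaves of $T$; translating this through $\sigma^{-1}$ yields the candidate ordering $(v_1,\ldots,v_n)$ of $V(G)$. The claim to verify is that its width is at most $k^2$.

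To this end, fix an index $i \in \{1,\ldots,n-1\}$, set $S=\{v_1,\ldots,v_i\}$, and let $r=u_0,u_1,\ldots,u_d=\sigma(v_i)$ be the path in $T$ from the root to the leaf of the last vertex added. Since $T$ has radius at most $k$ we have $d\le k$. For each $j\in\{0,1,\ldots,d-1\}$ let $A_j\subseteq V(G)$ be the $\sigma$-preimage of the leaves lying in subtrees rooted at the children of $u_j$ that strictly precede $u_{j+1}$ in the child order, with the convention that $A_{d-1}$ is enlarged to also contain $v_i$ itself (the subtree rooted at the leaf $u_d$). A direct check shows that each $A_j$ is a union of components of $T - u_j$, so by the definition of width at the internal node $u_j$ we have $\rho_G(A_j)\le k$. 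The sets $A_0,\ldots,A_{d-1}$ partition $S$, and subadditivity of the cut-rank on disjoint unions (a row-partitioned matrix has rank at most the sum of the ranks of its two row-blocks) then yields $\rho_G(S)\le\sum_{j=0}^{d-1}\rho_G(A_j)\le d\,k\le k^2$, as required.

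There is no serious technical obstacle; the substantive point is the choice to perform DFS from a radius-witness of $T$, which is precisely what forces every prefix of the ordering to decompose into at most $k$ width-bounded blocks, each controlled by a single internal node of the decomposition tree. Trivial boundary cases ($|V(G)|<2$, hence $k=0$) are handled by noting that both the rank-depth and the linear rank-width are zero in that regime.
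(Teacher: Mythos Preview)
The paper does not give a proof of this proposition; it is quoted from Kwon and Oum~\cite{KO2019a} and used as a black box. Your argument is correct and is essentially the natural proof (and, as far as one can tell, the one in the cited reference): read off a linear ordering by depth-first search of a $(k,k)$-decomposition rooted at a radius-witness, and bound each prefix by splitting it along the root-to-leaf path into at most $k$ blocks, each a union of parts of~$\mathcal P_{u_j}$ at a single internal node $u_j$ and hence of cut-rank at most~$k$; then invoke the row-block subadditivity $\rho_G\bigl(\bigsqcup_j A_j\bigr)\le\sum_j\rho_G(A_j)$.

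One small point worth making explicit: you tacitly need the root $r$ to be a non-leaf node, since you apply the width bound at $u_0=r$, and width is only defined at internal nodes. (Concretely, if $r$ were a leaf then $\sigma^{-1}(r)$ would be $v_1$ but would not lie in any $A_j$, so the sets $A_0,\ldots,A_{d-1}$ would fail to partition $S$ for $i\ge 1$.) This is harmless: if $\lvert V(G)\rvert\ge 3$ and a leaf $r$ witnesses radius at most~$k$, then its unique neighbour is internal and also witnesses radius at most~$k$; the remaining cases $\lvert V(G)\rvert\le 2$ are trivial.
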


        We write $\chi(G)$ to denote the chromatic number of $G$
        and $\omega(G)$ to denote the maximum size of a clique of $G$.
        A class $\mathcal C$ of graphs  is \emph{$\chi$-bounded}
        if there is a function $f$ such that
        $\chi(H)\le f(\omega(H))$ for all induced subgraphs $H$ of a graph in $\mathcal C$.
        In addition, if $f$ can be taken as a polynomial function, then
        $\mathcal C$ is \emph{polynomially $\chi$-bounded}.
        If $f$ can be taken as a linear function, then
        $\mathcal C$ is \emph{linearly $\chi$-bounded}.

        \begin{proposition}[Ne\v{s}et\v{r}il, Ossona de Mendez, Rabinovich, and Siebertz~\cite{NORS2019}]\label{prop:lrw}
          For every positive integer $r$, there exists an integer $c(r)$ such that
          for every graph $G$ of linear rank-width at most $r$, 
          \[
            \chi(G)\le c(r) \, \omega(G).
          \]
        \end{proposition}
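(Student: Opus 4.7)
I would proceed by induction on $r$. For the base case $r=0$, an ordering of width zero forces every initial-segment cut to correspond to the zero adjacency matrix, so $G$ has no edges and $\chi(G)\le 1 \le \omega(G)$; thus $c(0)=1$ works.

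For the inductive step, fix an ordering $(v_1,\dots,v_n)$ of $V(G)$ of width at most $r$, and for each cut index $i$ write $L_i=\{v_1,\dots,v_i\}$ and $R_i=\{v_{i+1},\dots,v_n\}$. The bipartite adjacency matrix between $L_i$ and $R_i$ has $\mathrm{GF}(2)$-rank at most $r$, so the vertices of $R_i$ partition into at most $2^r$ equivalence classes by their neighborhoods in $L_i$, and symmetrically on the left. The strategy is to choose a cut (for instance, one where the rank is exactly $r$, or a carefully chosen balanced cut) and decompose $R_i$ according to this equivalence: within a single class, every vertex sees $L_i$ in exactly the same way, so the edges to $L_i$ form a complete bipartite graph on some common neighborhood, intuitively consuming one rank dimension. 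Combined with monotonicity of cut-rank under restriction along the ordering, one aims to show that $G$ can be partitioned into $g(r)$ pieces, each of linear rank-width at most $r-1$, where $g(r)$ depends only on $r$. Applying the inductive hypothesis to each piece and assigning the pieces disjoint palettes then yields a proper coloring of $G$ with at most $g(r)\cdot c(r-1)\cdot \omega(G)$ colors, giving the recurrence $c(r)=g(r)\,c(r-1)$, whose solution depends only on $r$.

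\textbf{Main obstacle.} The chief technical difficulty is justifying that after the equivalence-class decomposition the pieces really do have linear rank-width at most $r-1$: while the ``rank dimension'' heuristic is compelling, making it rigorous requires careful tracking of how restricting the ordering interacts with the cut-rank function, since a restricted ordering need not automatically witness linear rank-width $r-1$. One robust workaround is to translate to the equivalent language of linear clique-width, where a bounded number of labels allows explicit inductive constructions; another is to prove a stronger induction hypothesis for every induced subgraph of a bounded-linear-rank-width graph of the form $\chi(H)\le c(r)\,\omega(H)+c'(r)$, which absorbs the boundary effects at each decomposition step. The rest of the argument, in particular the combination step using disjoint palettes, is routine once the decomposition lemma is in place.
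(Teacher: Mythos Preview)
The paper does not give its own proof of this proposition: it is quoted as a result of Ne\v{s}et\v{r}il, Ossona de Mendez, Rabinovich, and Siebertz~\cite{NORS2019}. The only hint the paper gives about the underlying argument is in the remark following Theorem~\ref{thm:linearchi}: the mechanism is a partition of $V(G)$ into a bounded number of sets each inducing a \emph{cograph}; since cographs are perfect, coloring each part optimally with disjoint palettes yields $\chi(G)\le c(r)\,\omega(G)$. That partition is obtained in one shot from the linear layout (essentially by tracking, for each vertex, its ``left type'' and ``right type'' across all cuts simultaneously), not by an induction that peels off one unit of rank at a time.

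Your plan is a different route, and the obstacle you flag is a genuine gap rather than a technicality. Fixing a single cut $i$ and grouping $R_i$ by neighborhood in $L_i$ gives you at most $2^r$ classes, but there is no reason the induced ordering on a single class should have width at most $r-1$: the adjacency between two later prefixes and suffixes, both lying entirely inside $R_i$, is unconstrained by the common neighborhood into $L_i$. The ``one rank dimension consumed'' heuristic applies only to cuts that straddle the chosen index $i$, not to cuts entirely to its right, so the recursion does not get off the ground. Your first suggested workaround, passing to linear clique-width, is closer to what actually works, but there the argument is not an induction on the width either: one uses the bounded number of labels to define a global coloring of vertices by label histories, and shows each color class induces a cograph. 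Your second workaround (an additive slack in the induction hypothesis) does not address the structural issue, since the problem is not a boundary error term but the absence of any width drop. If you want to salvage the inductive scheme, you would need to replace the single-cut equivalence by a global type function over all cuts, at which point you arrive at the cograph-partition argument anyway.
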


      By combining Proposition~\ref{prop:lrwbound} and Proposition~\ref{prop:lrw}, we can prove the following, which answers a previous question by Kim, Kwon, Oum, and Sivaraman~\cite{KKOS2019}.
      \begin{theorem}\label{thm:linearchi}
        For every positive integer $t$, 
        the class of graphs with no vertex-minor isomorphic to $P_t$ is
        linearly $\chi$-bounded.
      \end{theorem}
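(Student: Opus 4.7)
The plan is to chain together the three ingredients prepared in this section. Let $\mathcal{G}_t$ denote the class of graphs with no vertex-minor isomorphic to $P_t$. The first observation is that $\mathcal{G}_t$ is closed under induced subgraphs: every induced subgraph of $G$ is in particular a vertex-minor of $G$, so if $G$ has no $P_t$ vertex-minor, neither does any induced subgraph. Consequently, to establish linear $\chi$-boundedness of $\mathcal{G}_t$ it suffices to exhibit a constant $c_t$ such that $\chi(G) \le c_t\, \omega(G)$ for every $G \in \mathcal{G}_t$ itself.

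Next I would feed each such $G$ through the promised pipeline. By Theorem~\ref{thm:main1}, since $G$ has no vertex-minor isomorphic to $P_t$, the rank-depth of $G$ is strictly less than $N(t)$, hence at most $N(t)-1$. Applying Proposition~\ref{prop:lrwbound} to this bound, $G$ has linear rank-width at most $(N(t)-1)^2$. Finally, Proposition~\ref{prop:lrw} applied with $r = (N(t)-1)^2$ yields $\chi(G) \le c(r)\, \omega(G)$. Setting $c_t := c\bigl((N(t)-1)^2\bigr)$ gives the desired linear bound for every $G \in \mathcal{G}_t$.

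There is essentially no obstacle: the three pieces are already formulated exactly so they compose, and the closure of $\mathcal{G}_t$ under induced subgraphs is immediate. The only point one must be slightly careful about is not to invoke Proposition~\ref{prop:lrw} on $G$ directly expecting it to handle arbitrary induced subgraphs on its own; rather one should apply the whole chain (Theorem~\ref{thm:main1} $\to$ Proposition~\ref{prop:lrwbound} $\to$ Proposition~\ref{prop:lrw}) with the single uniform constant $c_t$ depending only on $t$, which works precisely because the hypothesis "no $P_t$ vertex-minor" is inherited by every induced subgraph.
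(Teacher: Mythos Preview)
Your proposal is correct and matches the paper's approach exactly: the paper states the theorem as an immediate consequence of combining Proposition~\ref{prop:lrwbound} and Proposition~\ref{prop:lrw} (with Theorem~\ref{thm:main1} supplying the rank-depth bound), which is precisely the chain you spell out. Your explicit remark about closure of $\mathcal{G}_t$ under induced subgraphs is a helpful addition but not a departure from the intended argument.
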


		We remark that there is an alternative way to prove Theorem~\ref{thm:linearchi} without using linear rank-width. First, DeVos, Kwon, and Oum~\cite[Lemma 4.10]{DKO2019} showed that if a graph has rank-depth $k$,
		then it has an $(a,k)$-shrubbery where \[ a= (1+o(1))2^{(2^{2k+1}(2^{2k+2}-1)+1)k/2}.\]  (Please see \cite{DKO2019} for the definition of an $(a,k)$-shrubbery.)
		Lemma 2.16~of Ne\v{s}et\v{r}il, Ossona de Mendez, Rabinovich, and Siebertz~\cite{NORS2019} states that every class of bounded shrub-depth can be partitioned into bounded number of vertex-disjoint induced subgraphs, each of which is a cograph. 
		Its (short and easy) proof shows that a graph with an $(a,k)$-shrubbery can be partitioned into at most $a$ vertex-disjoint induced subgraphs, each of which is a cograph. 
		Since cographs are perfect, we deduce that if $G$ has rank-depth at most $k$, then $\chi(G)\le \omega(G)(1+o(1))2^{(2^{2k+1}(2^{2k+2}-1)+1)k/2}$.

        \subsection{When does the class of $\mathcal H$-vertex-minor-free graphs have
          bounded rank-depth?}
        For a set $\mathcal H$ of graphs, we say that $G$ is \emph{$\mathcal H$-minor-free}
        if $G$ has no minor isomorphic to a graph in $\mathcal H$,
        and $G$ is \emph{$\mathcal H$-vertex-minor-free}
        if $G$ has no vertex-minor isomorphic to a graph in $\mathcal H$.
        Robertson and Seymour~\cite{RS1991} showed that $\mathcal H$-minor-free graphs
        have bounded tree-width if and only if $\mathcal H$ contains a planar graph.
        As an analogue, Geelen, Kwon, McCarty, and Wollan~\cite{GKMW2019} showed
        that $\mathcal H$-vertex-minor-free graphs have bounded rank-width
        if and only if $\mathcal H$ contains a circle graph. Interestingly, Theorem~\ref{thm:main1} allows us to characterize the classes $\mathcal H$ such that $\mathcal H$-vertex-minor-free graphs have bounded rank-depth. This is due to the following theorem; the equivalence between (a) and (b) was shown by Kwon and Oum~\cite{KO2013}
        and the equivalence between (a) and (c) was shown by  Adler, Farley, and Proskurowski~\cite{AFP2013}. 
        \begin{theorem}[Kwon and Oum~\cite{KO2013};
          Adler, Farley, and Proskurowski~\cite{AFP2013}]\label{thm:lrw1}
          Let $H$ be a graph. The following are equivalent.
          \begin{enumerate}[(a)]
          \item $H$ has linear rank-width at most one.
          \item $H$ is a vertex-minor of a path.
          \item $H$ has no vertex-minor isomorphic to $C_5$, $N$, or $Q$ in Figure~\ref{fig:cnq}.
          \end{enumerate}
        \end{theorem}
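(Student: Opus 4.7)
The plan is to prove the three-way equivalence by establishing the cycle (b)$\Rightarrow$(a)$\Rightarrow$(c)$\Rightarrow$(b). Along the way I will repeatedly use the fact, analogous to Lemma~\ref{lem:monotone}, that $\lrw$ is monotone under taking vertex-minors: the cut-rank function is invariant under local complementation, and restricting a linear ordering of a graph to an induced subgraph can only decrease the widths of the prefix cuts. Hence a vertex-minor of a graph of linear rank-width at most~$k$ again has linear rank-width at most~$k$.

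The implication (b)$\Rightarrow$(a) is the easiest. A path $P_n$ on $n\ge 2$ vertices admits its natural ordering along the path, in which every prefix is joined to its complement by at most one edge, so every prefix has cut-rank at most one. Hence $\lrw(P_n)\le 1$, and by the monotonicity just noted, so does every vertex-minor of a path.

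For (a)$\Rightarrow$(c) it suffices to verify directly that each of $C_5$, $N$, and $Q$ already has linear rank-width at least~$2$; since these are small graphs, one can check all orderings up to symmetry and exhibit, for each ordering, some prefix whose cut-rank is $\ge 2$. Any $H$ admitting one of them as a vertex-minor must then satisfy $\lrw(H)\ge 2$.

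The main content is (c)$\Rightarrow$(b). I would argue by induction on $\abs{V(H)}$, the small cases being handled by direct inspection. For the inductive step, choose a vertex $v$ such that $H-v$ still satisfies~(c), and apply induction to obtain a path $P'$ of which $H-v$ is a vertex-minor, together with a sequence of local complementations and vertex deletions witnessing this. The task is then to show that $v$ can be reinserted into a suitable path after possibly performing further local complementations at $v$ and its neighbors, so that the resulting graph is again a vertex-minor of a path; failing that, I would trace the obstruction and pull back one of $C_5$, $N$, or $Q$ inside $H$ as a vertex-minor. A cleaner alternative, essentially the route of Adler, Farley, and Proskurowski, is to first give a structural description of connected graphs with linear rank-width at most~$1$, the so-called \emph{thread graphs}, a subclass of distance-hereditary graphs admitting a path-like split decomposition, and then check by a finite case analysis that the minimal obstructions under the vertex-minor order are exactly $C_5$, $N$, and $Q$. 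The main obstacle in either route is precisely this minimality check: one must argue that no other small graph could be a minimal forbidden vertex-minor, and this is where the combinatorial tightness of the statement is concentrated.
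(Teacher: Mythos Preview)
The paper does not give its own proof of this theorem: it is quoted as a known result, with the text immediately preceding it attributing (a)$\Leftrightarrow$(b) to Kwon and Oum~\cite{KO2013} and (a)$\Leftrightarrow$(c) to Adler, Farley, and Proskurowski~\cite{AFP2013}. There is therefore nothing in the paper to compare your attempt against.

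As for the content of your sketch: the implications (b)$\Rightarrow$(a) and (a)$\Rightarrow$(c) are correct and are indeed the easy directions. For (c)$\Rightarrow$(b), your second route---characterising connected graphs of linear rank-width at most one via a path-shaped split decomposition (distance-hereditary ``thread'' structure) and then extracting the minimal vertex-minor obstructions---is essentially the strategy of~\cite{AFP2013}, so the plan is sound even if you have not carried it out. Your first proposed route, by contrast, is not really a plan: deleting a vertex $v$, applying induction, and then ``reinserting $v$ into a suitable path after possibly performing further local complementations'' hides the entire difficulty in the unspecified reinsertion step, and your fallback of ``pulling back one of $C_5$, $N$, or $Q$'' when reinsertion fails is exactly the minimality analysis you later identify as the hard part. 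If you want to write an actual proof rather than a pointer to the literature, commit to the structural route and carry out the obstruction analysis explicitly.
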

        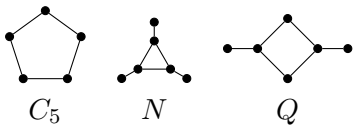
\begin{figure}
          \centering
          \tikzstyle{v}=[circle, draw, solid, fill=black, inner sep=0pt, minimum width=3pt]
          \begin{tabular}{ccc}
        \begin{tikzpicture}[scale=.5]
          \foreach \i in  {0,1,2,3,4} {
            \node [v] at (72*\i+90:1) (v\i){};
          }
          \draw (v0)--(v1)--(v2)--(v3)--(v4)--(v0);
        \end{tikzpicture}
            &
        \begin{tikzpicture}[scale=.5]
          \foreach \i in  {0,1,2} {
            \node [v] at (120*\i+90:1) (v\i){};
            \node [v] at (120*\i+90:.5) (w\i){};
            \draw(v\i)--(w\i);
          }
          \draw (w0)--(w1)--(w2)--(w0);
        \end{tikzpicture}
            &
        \begin{tikzpicture}[scale=.8]
          \foreach \i in  {0,2} {
            \node [v] at (90*\i+90:.5) (w\i){};
          }
          \foreach \i in  {1,3} {
            \node [v] at (90*\i+90:1) (v\i){};
            \node [v] at (90*\i+90:.5) (w\i){};
            \draw(v\i)--(w\i);
          }
          \draw (w0)--(w1)--(w2)--(w3)--(w0);
        \end{tikzpicture}
            \\
            $C_5$&$N$&$Q$
                       \end{tabular}
          \caption{Obstructions for being a vertex-minor of a path.}
          \label{fig:cnq}
        \end{figure}
        We define linear rank-width in the next subsection. Here, we only need the fact that
        linear rank-width does not increase when we take vertex-minors
        and 
        that paths have linear rank-width $1$ and arbitrary large rank-depth to deduce the following corollary from Theorems~\ref{thm:lrw1} and \ref{thm:main1}.
        \begin{corollary}
          Let $\mathcal H$ be a set of graphs. Then the following are equivalent.
          \begin{enumerate}[(a)]
          \item The class of $\mathcal H$-vertex-minor-free graphs has bounded rank-depth.
          \item $\mathcal H$ contains a graph of linear rank-width at most one.
          \item $\mathcal H$ contains a graph with no vertex-minor isomorphic to $C_5$, $N$, or $Q$.
          \end{enumerate}
        \end{corollary}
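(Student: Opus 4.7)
The plan is to derive this corollary purely as a bookkeeping exercise combining Theorem~\ref{thm:lrw1} (the characterization of graphs of linear rank-width at most one) with Theorem~\ref{thm:main1} (the long-path vertex-minor theorem), plus the two monotonicity facts highlighted just before the corollary: linear rank-width is monotone under vertex-minors, and paths are examples of graphs with linear rank-width one and arbitrarily large rank-depth. The equivalence (b)$\Leftrightarrow$(c) is immediate from Theorem~\ref{thm:lrw1} applied to a single graph in $\mathcal H$, so the real work is to match condition (b) with condition (a).

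For (b)$\Rightarrow$(a), I would pick a graph $H_0 \in \mathcal H$ with linear rank-width at most one. By Theorem~\ref{thm:lrw1}, $H_0$ is a vertex-minor of some path $P_n$. Transitivity of the vertex-minor relation then gives that every graph containing $P_n$ as a vertex-minor also contains $H_0$ as a vertex-minor; contrapositively, every $\mathcal H$-vertex-minor-free graph is $P_n$-vertex-minor-free. Theorem~\ref{thm:main1} now bounds the rank-depth of such graphs by $N(n)$, so the class of $\mathcal H$-vertex-minor-free graphs has bounded rank-depth.

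For (a)$\Rightarrow$(b), I would argue by contrapositive: assume every $H' \in \mathcal H$ has linear rank-width at least two, and show that the class of $\mathcal H$-vertex-minor-free graphs has unbounded rank-depth. Since linear rank-width does not increase under taking vertex-minors and every path has linear rank-width one, no path can have any graph in $\mathcal H$ as a vertex-minor. Thus every path is $\mathcal H$-vertex-minor-free; since paths have arbitrarily large rank-depth, the class of $\mathcal H$-vertex-minor-free graphs does too.

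There is no real obstacle to overcome here, as this is a direct assembly of the results already quoted; the one point worth checking carefully is the monotonicity of linear rank-width under vertex-minors used in (a)$\Rightarrow$(b), which is standard and implicit in the paper's setup. If anything subtle arises, it will be making sure the quantifier structure in (b) and (c) is handled correctly, namely that the condition requires only a single witness graph in $\mathcal H$, matching the way Theorem~\ref{thm:main1} is invoked with that witness's path.
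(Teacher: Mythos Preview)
Your proposal is correct and matches the paper's approach exactly: the paper states (just before the corollary) that it follows from Theorems~\ref{thm:lrw1} and~\ref{thm:main1} together with the monotonicity of linear rank-width under vertex-minors and the fact that paths have linear rank-width one and arbitrarily large rank-depth. Your writeup simply spells out these deductions in the natural way.
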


\paragraph*{Acknowledgement.}
The authors would like to thank anonymous reviewers for their helpful suggestions.

\end{document}